\newtheorem{con}{Conjecture}[section]
\newtheorem{thm}[con]{Theorem}
\newtheorem{cor}[con]{Corollary}
\newtheorem{lem}[con]{Lemma}
\theoremstyle{definition}
\newtheorem{rem}[con]{Remark}
\theoremstyle{remark}
\newcommand{\Romannum}[1]{\uppercase\expandafter{\romannumeral #1}}
\numberwithin{equation}{section}
\newcommand\keywordsname{Key words}
\newcommand\AMSname{AMS subject classifications}
\newenvironment{@abssec}[1]{%
     \if@twocolumn
       \section*{#1}%
     \else
       \vspace{.05in}\footnotesize
       \parindent .2in
         {\upshape\bfseries #1. }\ignorespaces
     \fi}
     {\if@twocolumn\else\par\vspace{.1in}\fi}
\begin{document}

\vskip6cm
\title{On a conjecture for the signless Laplacian eigenvalues
\footnote{Research supported by National Natural Science Foundation of China
(Nos.10901061, 11071088), the Zhujiang Technology New Star Foundation of
Guangzhou (No.2011J2200090), and Program on International Cooperation and Innovation, Department of Education, Guangdong Province 
(No.2012gjhz0007).}}
\author{Jieshan Yang \qquad Lihua You\footnote{{\it{Corresponding author:\;}}ylhua@scnu.edu.cn}}
\vskip.2cm
\date{{\small
School of Mathematical Sciences, South China Normal University,\\
Guangzhou, 510631, China\\
}} \maketitle

\begin{abstract}

 \vskip.3cm
 Let $G$ be a simple graph with $n$ vertices and $e(G)$ edges, and $q_1(G)\geq q_2(G)\geq\cdots\geq q_n(G)\geq0$ be the signless Laplacian eigenvalues of $G.$ Let $S_k^+(G)=\sum_{i=1}^{k}q_i(G),$ where $k=1, 2, \ldots, n.$ F. Ashraf et al. conjectured that $S_k^+(G)\leq e(G)+\binom{k+1}{2}$ for $k=1, 2, \ldots, n.$ In this paper, we give various upper bounds for $S_k^+(G),$ and prove that this conjecture is true for the following cases: connected graph with sufficiently large $k,$ unicyclic graphs and bicyclic graphs for all $k,$ and tricyclic graphs when $k\neq 3.$
 \vskip.2cm \noindent{\it{AMS classification:}}  05C50; 05C35; 15A18
  \vskip.2cm \noindent{\it{Keywords:}} Signless Laplacian eigenvalues; Conjecture; Connected graph; Unicyclic graph; Bicyclic graph.
\end{abstract}

\section{ Introduction}
\hskip.6cm
Let $G$ be a simple graph with vertex set $V(G)$ and edge set $E(G).$ The Laplacian matrix and the signless Laplacian matrix of $G$ are defined as $L(G)=D(G)-A(G)$ and $Q(G)=D(G)+A(G)$ respectively, where $A(G)$ is the adjacent matrix and $D(G)$ is the diagonal matrix of vertex degrees of $G$. It is well known that both $L(G)$ and $Q(G)$ are symmetric and positive semidefinite, then we can denote the eigenvalues of $L(G)$ and $Q(G)$, called respectively the Laplacian eigenvalues and the signless Laplacian eigenvalues of $G$, by $\mu_{1}(G)\geq\mu_{2}(G)\geq\ldots\geq\mu_{n}(G)=0$ and $q_{1}(G)\geq q_{2}(G)\geq\ldots\geq q_{n}(G)\geq 0$. Let $|U|$ be the cardinality of a finite set $U.$

Grone and Merris \cite{1994} conjectured that for a graph with $n$ vertices and degree sequence $\{d_v| v\in V(G)\},$ the following holds:
\vskip0.1cm
\hskip.5cm  $S_k(G)=\sum\limits_{i=1}^k\mu_i(G)\leq\sum\limits_{i=1}^k|\{v\in V(G)| d_v\geq i\}|,$ \quad for $k=1, 2, \ldots, n.$
\vskip0.1cm
Recently, it was proved by Bai \cite{2011a}. As a variation of the Grone-Merris conjecture, Brouwer \cite{2012c} conjectured that for a graph $G$ with $n$ vertices,
\vskip0.1cm
\hskip.5cm $S_k(G)\leq e(G)+\binom{k+1}{2},$ \quad for $k=1, 2, \ldots, n.$
\vskip0.1cm
This conjecture attracted many researchers. Haemers et al. \cite{2010b} showed that it is true for $k=2.$ Moreover, they obtained an upper bound of $S_k(T)$ for any tree $T$ with $n$ vertices and $k=1, 2, \ldots, n,$ i.e.,

\hskip.5cm $S_k(T)\leq e(T)+2k-1\leq e(T)+\binom{k+1}{2},$
\vskip0.1cm
\noindent and proved it holds for trees and threshold graphs. This was improved in \cite{2011b} to the stronger equality as follows:

\hskip.5cm $S_k(T)\leq e(T)+2k-1-\frac{2k-2}{n},$ \quad for $k=1, 2, \ldots, n.$
\vskip0.1cm
\noindent Moreover, the conjecture was proved to be true for unicyclic graphs, bicyclic graphs, regular graphs, split graphs, cographs and graphs with at most ten vertices. For more details, we refer readers to the references  \cite{2012d}, \cite{2012c},  \cite{2012b}, \cite{2010b}, \cite{2010c}, \cite{2012a}.

Motivated by the definition of $S_k(G)$ and Brouwer's conjecture, F. Ashraf et al. \cite{2013} proposed the following conjecture about $S^+_k(G),$ where $S^+_k(G)=\sum_{i=1}^kq_i(G)$ for $k=1, 2, \ldots, n.$

\begin{con}\label{con11} For any graph $G$ with $n$ vertices and any $k=1, 2, \ldots, n,$

\vskip0.2cm
\hskip0.5cm $S^+_k(G)\leq e(G)+\binom{k+1}{2}.$
\end{con}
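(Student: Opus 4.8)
The plan is to attack the full conjecture through one variational mechanism and to pin down precisely the regime that resists it. The starting point is the incidence description $Q(G)=BB^{\top}$, where $B$ is the $n\times e(G)$ vertex--edge incidence matrix, together with the quadratic form $\langle Q(G)x,x\rangle=\sum_{uv\in E(G)}(x_u+x_v)^2$; by the Ky Fan maximum principle, $S_k^+(G)=\max\sum_{i=1}^k\langle Q(G)x_i,x_i\rangle$ over orthonormal systems $x_1,\dots,x_k$. Two elementary facts then drive everything. First, $S_k^+(G)\le\operatorname{tr}Q(G)=2e(G)$, so the conjecture holds trivially whenever $\binom{k+1}{2}\ge e(G)$, which already disposes of all sufficiently large $k$ for every graph. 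Second, deleting an edge $uv$ removes from $Q(G)$ the rank-one positive semidefinite matrix $(\chi_u+\chi_v)(\chi_u+\chi_v)^{\top}$ of trace $2$, where $\chi_u$ is the indicator vector of the vertex $u$; hence Fan's subadditivity of the sum of the $k$ largest eigenvalues gives $S_k^+(G)\le S_k^+(G-uv)+2$.

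The engine for the remaining $k$ is to fix a spanning tree $T$ of a connected $G$ and strip away the $c=e(G)-n+1$ non-tree edges one at a time, applying the subadditivity bound $c$ times to obtain $S_k^+(G)\le S_k^+(T)+2c$. Because a tree is bipartite, $Q(T)$ and $L(T)$ are cospectral, so $S_k^+(T)=S_k(T)\le e(T)+2k-1$ by the tree estimate recalled above. Using $e(G)=e(T)+c=(n-1)+c$, the target inequality reduces to $2k-1+c\le\binom{k+1}{2}$, that is, to the single clean condition $c\le\binom{k-1}{2}$. This condition reproduces every case in the statement at once: $c=1$ needs $k\ge 3$ (unicyclic), $c=2$ needs $k\ge 4$, $c=3$ needs $k\ge 4$ (tricyclic, hence all $k\neq 3$ once the small cases are verified), and for a fixed graph it covers all $k$ with $\binom{k-1}{2}\ge e(G)-n+1$ (connected graphs with large $k$). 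The leftover values $k=1,2$, and $k=3$ for bicyclic graphs, are finished by a direct spectral check on the finitely many local configurations.

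What obstructs the full statement is exactly the complementary regime of small $k$ with large cyclomatic number $c$ --- dense graphs inspected at the very top of the spectrum. There the estimate ``$+2$ per non-tree edge'' is far too wasteful: once the top-$k$ invariant subspace of the partial graph is saturated, an added edge whose direction $\chi_u+\chi_v$ projects only weakly onto that subspace raises $S_k^+$ by much less than $2$. Turning this deficit into a quantitative saving is the heart of the matter; it would require tracking, throughout the edge-stripping, the projection of each $\chi_u+\chi_v$ onto the current top-$k$ eigenspace, or equivalently bounding $\sum_{i=1}^k\lambda_i$ for the adjacency eigenvalues $\lambda_1\ge\cdots$ of the (necessarily dense) line graph of $G$, via the identity $q_i(G)=2+\lambda_i$ on the nonzero spectrum that comes from $B^{\top}B=2I+A_{\ell}$ with $A_{\ell}$ the line-graph adjacency matrix.

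I expect the decisive obstacle to be the absence of a combinatorial majorization for $Q$ matching the Grone--Merris--Bai theorem for $L$. For bipartite $G$, cospectrality gives $S_k^+(G)=S_k(G)\le\sum_{i=1}^k d_i^{\ast}$ with $d^{\ast}$ the conjugate degree sequence $d_i^{\ast}=|\{v:\,d_v\ge i\}|$, after which one can test the conjecture directly against $\sum_{i=1}^k d_i^{\ast}$; but no such domination is known for $Q$ on non-bipartite graphs, and this is precisely where a purely spectral peeling loses control. The extremal small examples --- notably the tricyclic, $k=3$ case, which the engine does not reach --- together with the equality attained already by the star at $k=1$, show that the bound $e(G)+\binom{k+1}{2}$ is genuinely tight rather than slack, so any complete argument must be exact, not merely asymptotic. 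Establishing a signless analogue of the Grone--Merris majorization, or carrying out a sharp accounting of the projection deficits in the edge-stripping, is in my view the crux on which the full conjecture turns.
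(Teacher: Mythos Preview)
The statement is an open conjecture; the paper does not prove it in full but establishes it for unicyclic and bicyclic graphs (all $k$), for tricyclic graphs ($k\neq 3$), and for connected graphs with $k$ sufficiently large. Your proposal is likewise a partial attack, and your core engine --- strip the $c=e(G)-n+1$ non-tree edges using Fan subadditivity, apply the tree bound $S_k^+(T)\le e(T)+2k-1$, and reduce to $c\le\binom{k-1}{2}$ --- is exactly the content of the paper's Theorem~3.4 and Corollary~3.5 (the paper uses the marginally sharper tree estimate $e(T)+2k-1-\frac{2k-2}{n}$, yielding the threshold $c\le\binom{k-1}{2}+\frac{2k-2}{n}$, but the integer consequences coincide). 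Your diagnosis of the obstructed regime (small $k$, large $c$) and of the missing signless Grone--Merris analogue is apt and in fact goes beyond what the paper explicitly discusses.

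The one genuine gap in your proposal is the clause ``$k=3$ for bicyclic graphs, are finished by a direct spectral check on the finitely many local configurations.'' This is not a finite check: there are infinitely many bicyclic graphs on $n\ge 11$ vertices, and the paper devotes roughly half of Section~4 (Theorems~4.6 and~4.12, supported by Lemmas~4.3--4.13) to this single case. The argument first reduces from hanging trees to pendant vertices via Corollary~4.4, then splits into $\infty$-type and $\theta$-type bicyclic graphs; the $\theta$-type case alone requires a structural classification into sixteen infinite families $\mathbb{U}_n^1,\dots,\mathbb{U}_n^{16}$, each handled by explicit eigenvalue estimates (e.g.\ proving $q_3(U_n^1(a,b))<2$ by locating roots of a degree-five factor of the characteristic polynomial) together with the decomposition lemmas for $G_1\sim G_2$ and $G_1\approx G_2$. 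None of this is routine, and your proposal gives no indication of how it would go. So while your overall strategy and your honest assessment of its limits match the paper's, the phrase ``direct spectral check'' conceals precisely the hardest piece of what the paper actually proves.
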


In \cite{2013}, the authors proved that Conjecture \ref{con11} is true for $k=1,2,n-1,n$ for all graphs, for all $k$ for regular graphs and for all graphs with at most ten vertices.


In this paper, some useful notations and preliminaries are given in Section 2. By employing similar techniques to those applied in \cite{2012b} or \cite{2012a}, we give various upper bounds for $S^+_k(G)$ and show Conjecture \ref{con11} is true for connected graph with sufficiently large $k$ in Section 3, and prove Conjecture \ref{con11} to be true in Section 4 for the following cases: unicyclic graphs and bicyclic graphs for all $k,$ and tricyclic graphs when $k\neq3$.

\section{Preliminaries}
\hskip.6cm  In this section, we introduce some notations and basic properties which we need to use in the proofs of our main results.

For two graphs $G$ and $H$, the union of $G$ and $H$, denoted $G\cup H,$ is the graph whose vertex set is $V(G)\cup V(H)$ and whose edge set is $E(G)\cup E(H).$
Let $nG$ denote $n$ copies of a graph $G.$  For $H,$ a subgraph of $G,$ $G-E(H)$ is the subgraph of $G$ whose edge set is $E(G)\setminus E(H),$ while $G-H$ is the subgraph of $G$ induced by the vertex set $V(G)\setminus V(H).$

Let $\lambda_1(M)\geq \lambda_2(M)\geq \cdots\geq\lambda_n(M)$ be the eigenvalues of any matrix $M,$ and 
$\sigma(M)=\{\lambda_1(M), \lambda_2(M), \cdots, \lambda_n(M)\}$ be the spectrum of M.

\begin{lem}\label{lem24}{\rm (\cite{1949})}
Let $A$ and $B$ be two real symmetric matrices of order $n.$ Then for any $1\leq k\leq n,$ $\sum\limits_{i=1}^k\lambda_i(A+B)\leq \sum\limits_{i=1}^k\lambda_i(A)+\sum\limits_{i=1}^k\lambda_i(B).$
\end{lem}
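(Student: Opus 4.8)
The plan is to reduce the inequality to Ky Fan's extremal characterization of the sum of the $k$ largest eigenvalues of a real symmetric matrix, namely
$$\sum_{i=1}^k\lambda_i(M)=\max\Big\{\operatorname{tr}(X^\top MX):X\in\mathbb{R}^{n\times k},\ X^\top X=I_k\Big\},$$
and then to exploit the linearity of the trace. Granting the principle for a moment, the lemma is immediate: pick $X$ with orthonormal columns attaining the maximum for $A+B$, so that
$$\sum_{i=1}^k\lambda_i(A+B)=\operatorname{tr}\big(X^\top(A+B)X\big)=\operatorname{tr}(X^\top AX)+\operatorname{tr}(X^\top BX)\leq\sum_{i=1}^k\lambda_i(A)+\sum_{i=1}^k\lambda_i(B),$$
where the last step applies the principle (now only as an inequality) to $A$ and to $B$ with this same $X$.

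So the substance is to prove the extremal principle. I would fix an orthonormal eigenbasis $v_1,\dots,v_n$ of $M$ with $Mv_j=\lambda_j(M)v_j$, and for arbitrary $X$ with orthonormal columns $x_1,\dots,x_k$ write $\operatorname{tr}(X^\top MX)=\sum_{i=1}^k x_i^\top Mx_i=\sum_{j=1}^n\lambda_j(M)\,c_j$ with $c_j=\sum_{i=1}^k(v_j^\top x_i)^2$. The two facts needed are that $0\leq c_j\leq 1$ — since $c_j$ is the squared norm of the orthogonal projection of the unit vector $v_j$ onto $\operatorname{span}(x_1,\dots,x_k)$ — and that $\sum_{j=1}^n c_j=\sum_{i=1}^k\|x_i\|^2=k$. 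A linear functional $\sum_j\lambda_j(M)c_j$ over the polytope $\{0\leq c_j\leq 1,\ \sum_j c_j=k\}$ is maximized at a vertex placing all weight on the $k$ largest eigenvalues, which gives $\sum_{i=1}^k\lambda_i(M)$; equality is realized by $X=[v_1\ \cdots\ v_k]$.

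The only genuinely delicate point is this last combinatorial step — confirming the constraints $0\leq c_j\leq 1$, $\sum_j c_j=k$, and that the extremum of a linear functional on the resulting hypersimplex is attained with $k$ coordinates equal to $1$. An alternative that sidesteps it is induction on $k$ via Weyl's inequality $\lambda_{i+j-1}(A+B)\leq\lambda_i(A)+\lambda_j(B)$, but the variational argument is shorter and more transparent, so I would take that route; in any case the result is classical (Ky Fan), so a brief citation-style proof suffices.
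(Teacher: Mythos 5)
Your argument is correct: the reduction of the subadditivity to Ky Fan's extremal characterization $\sum_{i=1}^k\lambda_i(M)=\max\{\operatorname{tr}(X^\top MX):X^\top X=I_k\}$ is the standard proof of this classical inequality, and your verification of the characterization itself (the constraints $0\leq c_j\leq1$, $\sum_j c_j=k$, and the maximization of a linear functional over the resulting hypersimplex) is sound. The paper itself offers no proof, stating the lemma only with a citation to Fan (1949), so there is nothing to compare against beyond noting that your variational route is precisely the argument behind the cited result.
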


For convenience, if $k>n,$ we denote $S^+_k(G)=S^+_n(G)$ since $S^+_k(G\cup (k-n)K_1)=S^+_n(G).$

\begin{lem}\label{lem25}
Let $G$ be a graph with $n$ vertices, $G_1, G_2, \ldots, G_t$ be the edge disjoint subgraphs of $G$ such that $E(G)=\cup_{i=1}^{t}E(G_i),$ where $t\geq 1.$ Then for any integer $k$ with $1\leq k\leq n,$
\vskip.1cm
\hskip.5cm $S^+_k(G)\leq\sum\limits_{i=1}^tS^+_k(G_i).$
\end{lem}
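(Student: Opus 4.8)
The plan is to realize $Q(G)$ as a sum of the signless Laplacian matrices of the pieces $G_i$ and then apply Ky Fan's inequality (Lemma \ref{lem24}). First I would note that for any graph $H$ on a vertex set contained in $V(G)$, we may pad $H$ with isolated vertices so that $Q(H)$ is an $n\times n$ symmetric matrix indexed by $V(G)$; the nonzero eigenvalues are unchanged, and since $S^+_k$ was defined to be insensitive to added isolated vertices (the remark immediately preceding the statement), $S^+_k(H)$ is unchanged as well. With this convention all the matrices $Q(G_1),\ldots,Q(G_t),Q(G)$ live in the same space of $n\times n$ real symmetric matrices.

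The key identity is $Q(G)=\sum_{i=1}^t Q(G_i)$. To see this, recall that $Q(H)=D(H)+A(H)=\sum_{uv\in E(H)} B_{uv}$, where $B_{uv}$ is the $n\times n$ matrix with $1$'s in the diagonal positions $(u,u)$ and $(v,v)$ and in the off-diagonal positions $(u,v)$ and $(v,u)$, and $0$'s elsewhere (equivalently $B_{uv}=(e_u+e_v)(e_u+e_v)^{\mathsf T}$). Because the edge sets $E(G_i)$ are pairwise disjoint and their union is $E(G)$, summing $Q(G_i)=\sum_{uv\in E(G_i)} B_{uv}$ over $i$ gives exactly $\sum_{uv\in E(G)} B_{uv}=Q(G)$; the diagonal degrees add up correctly precisely because each edge of $G$ is counted once across the collection.

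Now I would finish by induction on $t$ using Lemma \ref{lem24}. For $t=1$ the statement is trivial. For the inductive step, write $Q(G)=\bigl(\sum_{i=1}^{t-1}Q(G_i)\bigr)+Q(G_t)$, apply Lemma \ref{lem24} with $A=\sum_{i=1}^{t-1}Q(G_i)$ and $B=Q(G_t)$ to get $\sum_{i=1}^k\lambda_i(Q(G))\le \sum_{i=1}^k\lambda_i\bigl(\sum_{i=1}^{t-1}Q(G_i)\bigr)+\sum_{i=1}^k\lambda_i(Q(G_t))$, and then bound the first term by $\sum_{i=1}^{t-1}S^+_k(G_i)$ via the induction hypothesis (noting $\sum_{i=1}^{t-1}Q(G_i)=Q(G_1\cup\cdots\cup G_{t-1})$, another edge-disjoint decomposition). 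Since $\lambda_i(Q(H))=q_i(H)$ and these sums are the definitions of $S^+_k$, this yields $S^+_k(G)\le\sum_{i=1}^t S^+_k(G_i)$.

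I do not anticipate a serious obstacle here; the only point requiring care is the bookkeeping with isolated vertices so that all matrices are genuinely of the same order and the equality $Q(G)=\sum_i Q(G_i)$ holds on the nose — once that is set up, the result is an immediate consequence of Ky Fan's inequality applied inductively.
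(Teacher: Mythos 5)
Your proposal is correct and follows essentially the same route as the paper: pad each $G_i$ with isolated vertices so that $Q(G)=\sum_{i=1}^t Q(G_i\cup(n-n_i)K_1)$, then apply Ky Fan's inequality (Lemma \ref{lem24}). The paper invokes the $t$-matrix version of Ky Fan directly, while you make the induction on $t$ explicit; this is only a presentational difference.
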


\begin{proof}
Let $|V(G_i)|=n_i,$ $i=1, 2, \ldots, t.$ By Lemma \ref{lem24}, we have

\hskip.5cm $S_k^+(G)=\sum\limits_{j=1}^kq_j(G)=\sum\limits_{j=1}^k\lambda_j(Q(G))$

\hskip1.75cm $=\sum\limits_{j=1}^k\lambda_j(\sum\limits_{i=1}^tQ(G_i\cup(n-n_i)K_1))$

\hskip1.75cm $\leq\sum\limits_{i=1}^t\sum\limits_{j=1}^k\lambda_j(Q(G_i\cup(n-n_i)K_1))$

\hskip1.75cm $=\sum\limits_{i=1}^tS^+_k(G_i).$
\end{proof}

\begin{cor}\label{cor26}If Conjecture \ref{con11} is false for some integer $k,$ then there exits a counterexample $G$ such that $S^+_k(H)> e(H)$ for every nonempty subgraph $H$ of $G.$
\end{cor}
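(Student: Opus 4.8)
The plan is to argue by contradiction, taking a minimal counterexample and showing that minimality forces the stated property. Suppose Conjecture \ref{con11} fails for some $k$. Among all graphs $G$ for which $S^+_k(G) > e(G) + \binom{k+1}{2}$, choose one with the fewest edges (the class is nonempty, and every such graph has at least one edge since $S^+_k(G)=0$ when $e(G)=0$, so the minimum number of edges is well-defined and positive). Call this minimal counterexample $G$; I claim it has the desired property.

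First I would isolate the key inequality: if $H$ is a nonempty proper subgraph of $G$ with $G - E(H)$ also nonempty, then writing $E(G) = E(H) \cup E(G-E(H))$ as an edge-disjoint union, Lemma \ref{lem25} gives $S^+_k(G) \le S^+_k(H) + S^+_k(G-E(H))$. Since both $H$ and $G-E(H)$ have strictly fewer edges than $G$, minimality of $G$ means neither is a counterexample, so $S^+_k(H) \le e(H) + \binom{k+1}{2}$ and $S^+_k(G-E(H)) \le e(G-E(H)) + \binom{k+1}{2}$. Adding these and using $e(H) + e(G-E(H)) = e(G)$ would yield $S^+_k(G) \le e(G) + 2\binom{k+1}{2}$, which is not yet the contradiction I want — I need to be more careful and only subtract off one subgraph's worth of edges at a time, or rather peel off just the edges of $H$ and keep $G - E(H)$ intact as "most of $G$."

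The cleaner route: suppose for contradiction that $G$ has a nonempty subgraph $H$ with $S^+_k(H) \le e(H)$. If $E(H) = E(G)$ then $S^+_k(G) \le e(G) < e(G) + \binom{k+1}{2}$, contradicting that $G$ is a counterexample, so $E(H) \subsetneq E(G)$ and $G' := G - E(H)$ is a nonempty graph with fewer edges than $G$. By minimality, $G'$ is not a counterexample: $S^+_k(G') \le e(G') + \binom{k+1}{2} = e(G) - e(H) + \binom{k+1}{2}$. Now Lemma \ref{lem25} applied to the edge-disjoint decomposition $E(G) = E(H) \cup E(G')$ gives
$$S^+_k(G) \le S^+_k(H) + S^+_k(G') \le e(H) + \bigl(e(G) - e(H) + \tbinom{k+1}{2}\bigr) = e(G) + \tbinom{k+1}{2},$$
contradicting the assumption that $G$ is a counterexample. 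Hence no such $H$ exists, i.e. $S^+_k(H) > e(H)$ for every nonempty subgraph $H$ of $G$.

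I expect the only subtlety — and the thing to state carefully — is the bookkeeping around the trivial cases: that the minimal counterexample exists (graphs with no edges are never counterexamples, so the infimum over edge counts is attained at a positive value), and that when one peels off $E(H)$ the remainder $G - E(H)$ is genuinely a smaller graph unless $H$ already accounts for all the edges, in which case $H = G$ up to isolated vertices and the bound $S^+_k(H) \le e(H)$ directly contradicts counterexample-hood. No hard estimates are needed; the whole argument is an extremal/minimality packaging of Lemma \ref{lem25}.
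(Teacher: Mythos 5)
Your proof is correct and is essentially the paper's own argument: both take a counterexample with the minimum number of edges, apply Lemma \ref{lem25} to the edge-disjoint decomposition $E(G)=E(H)\cup E(G-E(H))$, and use minimality to reach a contradiction. The only cosmetic difference is that the paper concludes that $G-E(H)$ would be a smaller counterexample, whereas you conclude that $G$ itself could not have been one; these are the same inequality read in opposite directions.
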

\begin{proof}
Let $G$ be a counterexample of Conjecture \ref{con11} for some integer $k,$ having the minimun number of edges. If $G$ has a nonempty subgraph $H$ with $S^+_k(G)\leq e(H),$ then by Lemma \ref{lem25},
\vskip0.1cm
\hskip.5cm $e(G)+\binom{k+1}{2}<S^+_k(G)\leq S^+_k(H)+S^+_k(G-E(H))\leq e(H)+S^+_k(G-E(H)).$
\vskip0.1cm
\noindent This implies that $S^+_k(G-E(H))>e(G)-e(H)+\binom{k+1}{2}=e(G-E(H))+\binom{k+1}{2},$ which is a contradiction to the choice of $G.$
\end{proof}

\begin{rem}\label{rem27}
Corollary \ref{cor26} is a natural extension of Lemma 15 in \cite{2013}.
\end{rem}

\begin{lem}\label{lem22}{\rm (\cite{2013})}
If for some $k,$ Conjecture \ref{con11} holds for graphs $G$ and $H$, then it holds for $G\cup H.$
\end{lem}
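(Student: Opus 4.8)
The plan rests on two facts: that the signless Laplacian becomes block-diagonal under disjoint union, and the superadditivity of the function $m\mapsto\binom{m+1}{2}$. At the outset I would note that ``$G\cup H$'' is meant here as a \emph{disjoint} union --- the situation that occurs in the applications, where this lemma is used to assemble the connected components of a disconnected graph --- so $G$ and $H$ may be assumed vertex-disjoint; I would also recall the convention $S_k^+(\cdot)=S_n^+(\cdot)$ for $k>n$ fixed above, to be used freely.

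The key step: after relabelling vertices, $Q(G\cup H)$ is the block-diagonal matrix with diagonal blocks $Q(G)$ and $Q(H)$, so its eigenvalue multiset is $\sigma(Q(G))\cup\sigma(Q(H))$. Merging the two nonincreasing eigenvalue lists and keeping the $k$ largest entries, these entries consist of a prefix $q_1(G),\ldots,q_i(G)$ of the $G$-list together with a prefix $q_1(H),\ldots,q_j(H)$ of the $H$-list, for some integers $i,j\ge 0$ with $i+j=k$ (any splitting produced by the merge works, since ties do not affect the sum). Hence
\[
S_k^+(G\cup H)=S_i^+(G)+S_j^+(H),\qquad i+j=k,\quad i,j\ge 0,
\]
the cases $i>|V(G)|$, $j>|V(H)|$ or $k>|V(G)|+|V(H)|$ being absorbed by the convention above.

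Next I would feed in the hypothesis that Conjecture~\ref{con11} holds for $G$ and for $H$, applied at the indices $i$ and $j$: $S_i^+(G)\le e(G)+\binom{i+1}{2}$ and $S_j^+(H)\le e(H)+\binom{j+1}{2}$. Adding these and using $e(G\cup H)=e(G)+e(H)$ gives $S_k^+(G\cup H)\le e(G\cup H)+\binom{i+1}{2}+\binom{j+1}{2}$, so it remains only to check
\[
\binom{i+1}{2}+\binom{j+1}{2}=\frac{i^2+j^2+(i+j)}{2}\le\frac{(i+j)^2+(i+j)}{2}=\binom{i+j+1}{2}=\binom{k+1}{2},
\]
which holds because $i^2+j^2\le(i+j)^2$, i.e.\ $2ij\ge 0$, as $i,j\ge 0$. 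This yields $S_k^+(G\cup H)\le e(G\cup H)+\binom{k+1}{2}$, as required.

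I do not expect a genuine obstacle: the content is entirely the block-diagonal decomposition of $Q$ under disjoint union, which turns $S_k^+$ of the union into a sum $S_i^+(G)+S_j^+(H)$ over a partition $i+j=k$, after which one invokes the conjecture for $G$ and $H$ at the (smaller) indices $i,j$ and finishes with the superadditivity of $\binom{m+1}{2}$. The only points needing a line of care are that the union must be disjoint for the two spectra to add, and the bookkeeping with the convention $S_k^+=S_n^+$ for $k>n$, so that the hypothesis legitimately applies at the auxiliary indices $i$ and $j$ even when they exceed the orders of $G$ and $H$.
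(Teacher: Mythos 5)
Your proof is correct and is essentially the argument of the source \cite{2013}, which the paper cites for this lemma without reproducing a proof; it is also the same block-diagonalization-plus-superadditivity technique ($\binom{i+1}{2}+\binom{j+1}{2}\leq\binom{i+j+1}{2}$ after splitting the $k$ largest eigenvalues between the components) that the paper itself deploys in Lemmas \ref{lem43} and \ref{lem44}. The one point worth making explicit is that the hypothesis must be read as the conjecture holding for $G$ and $H$ at all indices up to $k$ (not merely at $k$ itself), since, as you note, you invoke it at the auxiliary indices $i$ and $j$; this is indeed how the lemma is stated loosely and used throughout the paper.
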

Therefore, in order to prove Conjecture \ref{con11}, it suffices to do so for connected graphs. Thus we only need to consider the following conjecture.

\begin{con}\label{con21}
For any connected graph $G$ with $n$ vertices and any $k=1, 2, \ldots, n,$
\vskip0.2cm
\hskip.5cm $S^+_k(G)\leq e(G)+\binom{k+1}{2}.$
\end{con}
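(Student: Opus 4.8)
As Conjecture~\ref{con21} appears to lie beyond a complete proof, the plan is to confirm it in the four regimes announced in the abstract, with Lemma~\ref{lem25} as the main engine and the tree estimates $S_k(T)\le e(T)+2k-1$ of \cite{2010b} and $S_k(T)\le e(T)+2k-1-\tfrac{2k-2}{n}$ of \cite{2011b} as the external input; these apply to $S_k^+$ of trees and forests as well, since forests are bipartite and hence $Q\cong L$ on them. By Lemma~\ref{lem22} it suffices to work with connected graphs, which is why Conjecture~\ref{con21} is the target.

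The master step is the following. For a connected $G$ with $n$ vertices and $m$ edges, choose a spanning tree $T$, put $H=G-E(T)$ so that $e(H)=m-n+1$, and use $S_k^+(H)\le\mathrm{tr}\,Q(H)=2e(H)$; then Lemma~\ref{lem25} gives $S_k^+(G)\le S_k^+(T)+S_k^+(H)\le(e(T)+2k-1)+2e(H)=2m-n+2k$, so that $S_k^+(G)\le e(G)+\binom{k+1}{2}$ holds as soon as $m\le n+\tfrac{k(k-3)}{2}$, equivalently whenever the cyclomatic number $e(G)-n+1$ is at most $\binom{k-1}{2}$. This is the ``connected graph with sufficiently large $k$'' statement of Section~3, and it already settles unicyclic graphs for $k\ge3$ and bicyclic and tricyclic graphs for $k\ge4$. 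For the residual values $k=1,2$ one invokes \cite{2013}, so the unicyclic case is complete and in the tricyclic case the only survivor is $k=3$.

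It remains to treat bicyclic graphs with $k=3$. Writing $G=T\cup\{e_1,e_2\}$ with $T$ a spanning tree, the two non-tree edges form $2K_2$ or $P_3$, so $S_3^+(\{e_1,e_2\})\le4$ and Lemma~\ref{lem25} gives $S_3^+(G)\le S_3^+(T)+4$. The known bounds only give $S_3^+(T)\le n+4-\tfrac4n$, hence $S_3^+(G)\le n+8-\tfrac4n$, which for $n\ge5$ exceeds the target $n+7$ by exactly one unit. The plan is to close this unit gap by sharpening the $k=3$ tree estimate to $S_3^+(T)\le e(T)+4=n+3$; this is plausible because the extremal trees for $S_3^+$ (double stars and brooms) have $S_3^+$ tending to $n+3$ from strictly below, so the surplus $1-\tfrac4n$ should be recoverable --- either by a direct analysis of these extremal shapes, or by passing to the $2$-core of $G$ (a theta graph, two cycles sharing a vertex, or two cycles joined by a path, each of maximum degree at most $4$) and bounding the core and the pendant forest separately. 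Given $S_3^+(T)\le n+3$ one obtains $S_3^+(G)\le(n+3)+4=n+7$; the remaining small graphs ($n\le4$, or more comfortably $n\le10$) are disposed of by \cite{2013}, and Corollary~\ref{cor26} may be used to assume a minimal counterexample contains no wasteful subgraph.

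I expect the proof of the sharpened tree bound $S_3^+(T)\le e(T)+4$ --- or an equivalent $2$-core case analysis --- to be the main obstacle. It also dictates the shape of the results: the same method applied to tricyclic graphs would require the stronger estimate $S_3^+(T)\le e(T)+3$, and this is false (the star $K_{1,n-1}$ meets it with $S_3^+=n+2$, but a balanced double star on $n$ vertices has $S_3^+>n+2$), which is why $k=3$ for tricyclic graphs is left open.
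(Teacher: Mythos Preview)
Your master step (spanning tree plus Fritscher's bound) matches Theorem~\ref{thm34} and Corollary~\ref{cor35} exactly, and your handling of $k\ge3$ for unicyclic graphs and $k\ge4$ for bicyclic and tricyclic graphs is the paper's own argument. The divergence is at bicyclic $k=3$, and there your plan has a genuine gap: the sharpened tree bound $S_3^+(T)\le e(T)+4$ on which everything rests is \emph{false}.

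Your evidence for it --- that stars, brooms, and double stars approach $n+3$ from below --- overlooks trees with \emph{three} high-degree vertices. Take the caterpillar $C(m,m,m)$: a path $v_1v_2v_3$ with $m$ pendants at each $v_i$, so $n=3m+3$. Using the reflection $v_1\leftrightarrow v_3$ one finds that the three largest Laplacian eigenvalues are
\[
\mu_1=\tfrac{(m+4)+\sqrt{(m+4)^2-12}}{2},\qquad
\mu_2=\tfrac{(m+2)+\sqrt{(m+2)^2-4}}{2},\qquad
\mu_3=m+1,
\]
and hence $S_3^+(T)=S_3(T)\to n+4$ from below as $m\to\infty$. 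Already at $m=2$ (so $n=9$) one gets $S_3=8+\sqrt6+\sqrt3\approx 12.18>12=e(T)+4$; every $m\ge2$ is a counterexample. Thus the route ``spanning tree $+$ two extra edges $\le(n+3)+4$'' cannot close the unit gap, and no amount of refining the tree extremals will rescue it. Your own diagnosis of why tricyclic $k=3$ fails (a three-high-degree obstruction) is exactly the phenomenon that already kills the bicyclic argument one level down.

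What the paper does instead for bicyclic $k=3$ is much closer to your throwaway ``pass to the $2$-core'' remark, but carried out in earnest. It first reduces (via Corollary~\ref{cor44}) to bicyclic graphs with only pendant vertices attached to the core $\infty(p,q,t)$ or $\theta(p,q,t)$, and then introduces two composition lemmas: if the conjecture holds for $G_1$ and $G_2$ then it holds for $G_1\sim G_2$ (one bridging edge) and, for $k\ge3$, for $G_1\approx G_2$ (two bridging edges). Most configurations are then dispatched by cutting two edges so that $G=G_1\approx G_2$ with each $G_i$ a tree or unicyclic graph. The stubborn residual cases --- those where every such cut leaves a piece with at most one edge --- are finitely many one-parameter families (the classes $\mathbb U_n^1,\ldots,\mathbb U_n^{16}$ in Fig.~3), and these are finished off by explicit characteristic-polynomial computations showing $q_3<2$ or $q_3=2$ for the relevant unicyclic pieces (Lemmas~\ref{lem47}--\ref{lem410}) together with the bounds on $S_2^+(T_n^i)$ in Lemma~\ref{lem413}. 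The structural decomposition, not a uniform tree estimate, is what carries the weight.
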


\section{Upper bounds for $S^+_k(G)$}
\hskip.6cm In this section, we give various upper bounds for $S^+_k(G)$ in terms of the clique number $\omega$ and $e(G),$ the maximum degree $\Delta$ and $e(G),$ the matching number $m$ and $e(G)$, respectively. Furthermore, we also show Conjecture \ref{con21} is true for connected graph with sufficiently large $k$, which plays an important role in the proofs of our main results.

Recall that the clique number of $G$ is the number of vertices of a maximum complete subgraph of $G.$ A matching $M$ of $G$ is a subset of $E(G)$ such that no two edges in $M$ share a common vertex. A maximum matching is a matching which covers as many vertices as possible. The matching number of $G$ is the number of edges in a maximum matching of $G.$

\begin{thm}\label{thm31}
Let $G$ be a graph with clique number $\omega.$ Then for any $k=1, 2, \ldots, \omega,$
\vskip0.2cm
\hskip.5cm $S^+_k(G)\leq2e(G)-\omega^2+(k+2)\omega-2k.$
\end{thm}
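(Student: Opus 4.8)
The plan is to decompose $G$ into a maximum clique $K_\omega$ together with the remaining edges, and then apply the subadditivity of $S_k^+$ from Lemma~\ref{lem25}. Concretely, let $W\subseteq V(G)$ induce a clique on $\omega$ vertices, let $G_1$ be the spanning subgraph of $G$ whose edge set is $E(K_\omega)$ (the $\binom{\omega}{2}$ edges inside $W$), and let $G_2=G-E(G_1)$, so that $e(G_1)=\binom{\omega}{2}$ and $e(G_2)=e(G)-\binom{\omega}{2}$. By Lemma~\ref{lem25}, $S_k^+(G)\le S_k^+(G_1)+S_k^+(G_2)$ for every $1\le k\le n$.

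For the clique part $G_1$: the nonzero part of $Q(K_\omega)$ has spectrum $\{2\omega-2,\ \omega-2,\ \omega-2,\ \ldots,\ \omega-2\}$ ($\omega-1$ copies of $\omega-2$), since $A(K_\omega)=J-I$ has eigenvalues $\omega-1$ (once) and $-1$ ($\omega-1$ times), while $D(K_\omega)=(\omega-1)I$, so $Q(K_\omega)=(\omega-1)I+A(K_\omega)$ has eigenvalues $2\omega-2$ and $\omega-2$. Hence for $1\le k\le\omega$,
\[
S_k^+(G_1)=S_k^+(K_\omega)=(2\omega-2)+(k-1)(\omega-2)=(k+1)\omega-2k .
\]
For the remaining part $G_2$: I would bound $S_k^+(G_2)$ by the trivial inequality $S_k^+(H)\le \operatorname{tr}Q(H)=2e(H)$, valid for any graph $H$ and any $k$ because the signless Laplacian eigenvalues are nonnegative. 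This gives $S_k^+(G_2)\le 2e(G_2)=2e(G)-\omega(\omega-1)=2e(G)-\omega^2+\omega$. Adding the two bounds yields
\[
S_k^+(G)\le \bigl(2e(G)-\omega^2+\omega\bigr)+\bigl((k+1)\omega-2k\bigr)=2e(G)-\omega^2+(k+2)\omega-2k,
\]
which is exactly the claimed bound.

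The only genuine thing to check carefully is that the decomposition is legitimate in the sense of Lemma~\ref{lem25}: $G_1$ and $G_2$ must be edge-disjoint with $E(G)=E(G_1)\cup E(G_2)$, which holds by construction since $G_2=G-E(G_1)$; and one must pad each $G_i$ with isolated vertices up to $n$ vertices, which is precisely the convention already set up before Lemma~\ref{lem25} and does not change $S_k^+$. The spectrum computation for $K_\omega$ is the one small calculation, and the restriction $k\le\omega$ enters only through the formula $S_k^+(K_\omega)=(2\omega-2)+(k-1)(\omega-2)$, which requires $k-1\le\omega-1$ so that we are still summing genuine eigenvalues of $Q(K_\omega)$ rather than reverting to $S_\omega^+(K_\omega)$. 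I do not anticipate a real obstacle here; the argument is a clean application of the lemmas already established, and the main point is simply choosing the right subgraph to split off — namely a maximum clique, so that the $\omega^2$ term appears with the correct sign.
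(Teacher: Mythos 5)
Your proposal is correct and follows essentially the same route as the paper: split off a maximum clique $K_\omega$, apply Lemma~\ref{lem25}, and use $\sigma(Q(K_\omega))=\{2\omega-2,(\omega-2)^{[\omega-1]}\}$ to get $S_k^+(K_\omega)=(k+1)\omega-2k$. The only cosmetic difference is that you bound the remaining edges by the trace inequality $S_k^+(G_2)\le 2e(G_2)$, whereas the paper decomposes them into copies of $K_2$ with $S_k^+(K_2)=2$ each; both yield the identical contribution $2e(G)-\omega(\omega-1)$.
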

\begin{proof}
Obviously, $K_\omega$ is a subgraph of $G.$ Note that $\sigma(Q(K_\omega))=\{2\omega-2, \omega-2^{[\omega-1]}\},$ where $\lambda^{[t]}$
means that $\lambda$ is an eigenvalue with multiplicity $t$. Thus, by Lemma \ref{lem25},

\hskip.5cm $S^+_k(G)\leq S^+_k(K_\omega)+[e(G)-e(K_\omega)]S^+_k(K_2)$

\hskip1.8cm $=2\omega-2+(k-1)(\omega-2)+2[e(G)-\binom{\omega}{2}]$

\hskip1.8cm $=2e(G)-\omega^2+(k+2)\omega-2k.$
\end{proof}

\begin{thm}\label{thm32}
Let $G$ be a graph with maximum degree $\Delta.$ Then for  any $k=1, 2, \ldots, \Delta,$
\vskip0.2cm
\hskip.5cm $S^+_k(G)\leq2e(G)-\Delta+k.$
\end{thm}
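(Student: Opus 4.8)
The plan is to mimic the proof of Theorem~\ref{thm31}, replacing the clique $K_\omega$ by a star $K_{1,\Delta}$, which is the natural subgraph witnessing the maximum degree. First I would note that if $v$ is a vertex of degree $\Delta$ in $G$, then the star $K_{1,\Delta}$ centred at $v$ is a subgraph of $G$ with $\Delta$ edges. So I would apply Lemma~\ref{lem25} with the edge-disjoint decomposition consisting of this star together with $e(G)-\Delta$ copies of $K_2$ (one for each remaining edge), giving
\[
S^+_k(G)\leq S^+_k(K_{1,\Delta})+[e(G)-\Delta]\,S^+_k(K_2).
\]
Since $\sigma(Q(K_2))=\{2,0\}$, we have $S^+_k(K_2)=2$ for every $k\geq 1$, so the second term contributes $2e(G)-2\Delta$.

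The key computational input is the signless Laplacian spectrum of the star $K_{1,\Delta}$. The star is bipartite, so its signless Laplacian spectrum coincides with its Laplacian spectrum, which is well known to be $\{\Delta+1,\,1^{[\Delta-1]},\,0\}$; hence $\sigma(Q(K_{1,\Delta}))=\{\Delta+1,\,1^{[\Delta-1]},\,0\}$. (If one prefers not to quote this, it is a one-line check: $q_1=\Delta+1$ is forced by $q_1+q_n\le$ trace and the eigenvector structure, and the eigenvalue $1$ with multiplicity $\Delta-1$ comes from vectors supported on the leaves summing to zero.) Therefore, for $1\le k\le\Delta$,
\[
S^+_k(K_{1,\Delta})=(\Delta+1)+(k-1)\cdot 1=\Delta+k.
\]
Adding the two contributions gives $S^+_k(G)\le(\Delta+k)+(2e(G)-2\Delta)=2e(G)-\Delta+k$, as claimed.

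I do not anticipate a genuine obstacle here; the only point requiring a little care is justifying the spectrum of $Q(K_{1,\Delta})$, and the restriction $k\le\Delta$ is exactly what is needed so that the $k$ largest eigenvalues of the star are the single large eigenvalue $\Delta+1$ together with $k-1$ copies of $1$ (for $k>\Delta$ one would start picking up the eigenvalue $0$, and the formula would need adjusting). Everything else is a direct instantiation of Lemma~\ref{lem25} exactly as in the proof of Theorem~\ref{thm31}, so the argument is short.
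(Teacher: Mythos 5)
Your proposal is correct and is essentially the paper's own proof: both decompose $E(G)$ into the star $K_{1,\Delta}$ at a maximum-degree vertex plus $e(G)-\Delta$ copies of $K_2$, apply Lemma~\ref{lem25}, and use $\sigma(Q(K_{1,\Delta}))=\{\Delta+1,\,1^{[\Delta-1]},\,0\}$ to get $S^+_k(K_{1,\Delta})=\Delta+k$ for $k\le\Delta$. Nothing to add.
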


\begin{proof}
Obviously, $K_{1,\Delta}$ is a subgraph of $G.$ Note that $\sigma(Q(K_{1,\Delta}))=\{\Delta+1, 1^{[\Delta-1]}, 0\}.$  Thus, by Lemma \ref{lem25},

$S^+_k(G)\leq S^+_k(K_{1,\Delta})+[e(G)-e(K_{1,\Delta})]S^+_k(K_2)=\Delta+k+2[e(G)-\Delta]=2e(G)-\Delta+k.$
\end{proof}

\begin{thm}\label{thm33}
Let $G$ be a graph with matching number $m.$ Then for any $k=1, 2, \ldots, m,$
\vskip0.2cm
\hskip.5cm $S^+_k(G)\leq2e(G)-2m+2k.$
\end{thm}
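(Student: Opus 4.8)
The plan is to mimic exactly the proofs of Theorems \ref{thm31} and \ref{thm32}: find a subgraph of $G$ whose signless Laplacian spectrum is explicitly known and which ``witnesses'' the matching number $m$, then split the edges of $G$ into that subgraph plus isolated edges and apply Lemma \ref{lem25}. The natural candidate is the graph $mK_2$, i.e. a perfect matching on $2m$ vertices: since $G$ has matching number $m$, it contains $m$ pairwise disjoint edges, so $mK_2$ is a subgraph of $G$. Its signless Laplacian is a block-diagonal matrix with $m$ copies of $Q(K_2) = \begin{pmatrix} 1 & 1 \\ 1 & 1 \end{pmatrix}$, so $\sigma(Q(mK_2)) = \{2^{[m]}, 0^{[m]}\}$. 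Hence for $k \le m$ we get $S_k^+(mK_2) = 2k$.

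Then I would apply Lemma \ref{lem25} with the decomposition $E(G) = E(mK_2) \cup (E(G)\setminus E(mK_2))$, writing the second part as $e(G) - m$ disjoint copies of $K_2$ (more precisely, bounding $S_k^+$ of that remaining subgraph by $[e(G)-m]\,S_k^+(K_2)$ as in the earlier proofs). Since $S_k^+(K_2) = \lambda_1(Q(K_2)) = 2$ for any $k \ge 1$, this yields
\[
S^+_k(G) \le S^+_k(mK_2) + [e(G)-e(mK_2)]\,S^+_k(K_2) = 2k + 2[e(G)-m] = 2e(G) - 2m + 2k,
\]
which is exactly the claimed bound.

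There is essentially no real obstacle here; the only point requiring a sentence of justification is that $mK_2 \subseteq G$, which is immediate from the definition of the matching number, and the computation of $\sigma(Q(mK_2))$, which follows from the block structure and the trivial spectrum $\sigma(Q(K_2)) = \{2, 0\}$ already used implicitly in the previous two theorems. One should also note in passing that $e(G) \ge m$ so the coefficient $e(G) - m$ is nonnegative, making the edge-splitting step legitimate. So the proof is a direct three-line application of Lemma \ref{lem25}, structurally identical to the proof of Theorem \ref{thm32}.
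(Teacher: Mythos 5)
Your proposal is correct and is exactly the paper's own proof: take $mK_2\subseteq G$ with $\sigma(Q(mK_2))=\{2^{[m]},0^{[m]}\}$, so $S_k^+(mK_2)=2k$ for $k\le m$, and apply Lemma \ref{lem25} with the remaining $e(G)-m$ edges treated as copies of $K_2$. No differences worth noting.
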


\begin{proof}
Obviously, $mK_2$ is a subgraph of $G.$ Note that $\sigma(Q(mK_2))=\{2^{[m]}, 0^{[m]}\}.$  Thus, by Lemma \ref{lem25},
$S^+_k(G)\leq S^+_k(mK_2)+[e(G)-e(mK_2)]S^+_k(K_2)=2e(G)-2m+2k.$
\end{proof}

\begin{lem}\label{lem23}{\rm (\cite{2010a})}
If $G$ is bipartite, then $Q(G)$ and $L(G)$ share the same eigenvalues.
\end{lem}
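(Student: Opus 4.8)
If $G$ is bipartite, then $Q(G)$ and $L(G)$ share the same eigenvalues.

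The plan is to exhibit an explicit similarity (indeed an orthogonal, diagonal $\pm1$) transformation carrying $L(G)$ to $Q(G)$. Since $G$ is bipartite, fix a bipartition $V(G)=X\cup Y$. Define the diagonal matrix $D=\operatorname{diag}(d_v)$ indexed by $V(G)$, where $d_v=+1$ if $v\in X$ and $d_v=-1$ if $v\in Y$. This $D$ is symmetric and orthogonal, with $D^2=I$, so conjugation by $D$ preserves the spectrum.

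The key computation is the effect of conjugation on the three pieces $D(G)$ (the degree matrix), $A(G)$, and hence on $L(G)=D(G)-A(G)$ and $Q(G)=D(G)+A(G)$. I would observe that $D\,D(G)\,D^{-1}=D(G)$, since both are diagonal, so the degree matrix is untouched. For the adjacency matrix, the $(u,v)$ entry of $D\,A(G)\,D^{-1}=D\,A(G)\,D$ equals $d_u\,a_{uv}\,d_v$. Now an edge $uv$ of a bipartite graph always joins a vertex of $X$ to a vertex of $Y$, so whenever $a_{uv}=1$ we have $\{d_u,d_v\}=\{+1,-1\}$ and hence $d_u d_v=-1$; when $a_{uv}=0$ the product of course vanishes as well. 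Therefore $D\,A(G)\,D=-A(G)$. Combining, $D\,L(G)\,D^{-1}=D\,D(G)\,D-D\,A(G)\,D=D(G)+A(G)=Q(G)$. Since $L(G)$ and $Q(G)=D L(G) D^{-1}$ are similar, they have the same characteristic polynomial and hence the same eigenvalues (with multiplicities).

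There is essentially no obstacle here; the only point requiring the bipartiteness hypothesis is the identity $D A(G) D=-A(G)$, which fails precisely on odd edges — i.e. edges within a part — so the argument genuinely uses that $G$ has no odd cycle. One could equally phrase the proof in terms of quadratic forms: writing $x^\top Q(G)x=\sum_{uv\in E(G)}(x_u+x_v)^2$ and $x^\top L(G)x=\sum_{uv\in E(G)}(x_u-x_v)^2$, the substitution $x_v\mapsto d_v x_v$ interchanges the two forms, which reproves the claim via the Courant–Fischer characterization of eigenvalues; but the explicit similarity above is the cleanest route and I would present that.
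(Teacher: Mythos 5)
Your proof is correct; the signed diagonal similarity $D\,L(G)\,D^{-1}=Q(G)$ (with $D=\operatorname{diag}(\pm1)$ according to the bipartition) is the standard argument, and the paper itself gives no proof but simply cites this fact from the literature, so there is nothing to compare against.
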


By Lemma \ref{lem23},  Conjecture \ref{con21} is true for trees follows from  Brouwer's conjecture is true for trees, that is, for any tree $T$ with $n$ vertices and any $k=1, 2, \ldots, n,$
\vskip0.2cm
\hskip.5cm $S^+_k(T)\leq e(T)+2k-1-\frac{2k-2}{n}< e(T)+\binom{k+1}{2}.$  \quad \quad \quad\quad \quad \quad\quad \quad \quad \quad \quad \quad\quad \quad \quad\quad\quad(1)

\begin{thm}\label{thm34}
Let $G$ be a connected graph with $n$ vertices. Then
\vskip0.2cm
\hskip.5cm $S^+_k(G)\leq 2e(G)+2k-n-\frac{2k-2}{n}.$
\end{thm}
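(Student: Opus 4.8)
The plan is to reduce the statement to the tree bound (1) via a spanning-tree decomposition of $G$ together with Lemma \ref{lem25}. Since $G$ is connected on $n$ vertices, it has a spanning tree $T$ with $e(T)=n-1$, and the remaining edges form a subgraph $H=G-E(T)$ with $e(H)=e(G)-(n-1)$. Applying Lemma \ref{lem25} to the decomposition $E(G)=E(T)\cup E(H)$, and then bounding $H$ edge-by-edge by copies of $K_2$ (each contributing $S^+_k(K_2)=2$, using $\sigma(Q(K_2))=\{2,0\}$ exactly as in the proofs of Theorems \ref{thm31}--\ref{thm33}), we get
\[
S^+_k(G)\le S^+_k(T)+\bigl(e(G)-n+1\bigr)\cdot 2.
\]
Now invoke the tree estimate (1), namely $S^+_k(T)\le e(T)+2k-1-\frac{2k-2}{n}=n-1+2k-1-\frac{2k-2}{n}$.

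Substituting, $S^+_k(G)\le \bigl(n-1+2k-1-\frac{2k-2}{n}\bigr)+2e(G)-2n+2 = 2e(G)+2k-n-\frac{2k-2}{n}$, which is exactly the claimed bound. So the argument is essentially a one-line combination once the pieces are lined up: spanning tree, Lemma \ref{lem25}, the $K_2$-padding trick, and (1).

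The only genuine subtlety is bookkeeping with the parameter $k$ relative to the vertex counts of the subgraphs. In (1) the bound is stated for $k=1,\dots,n$ where $n=|V(T)|$, which here coincides with $|V(G)|$, so that is fine; for $k>n$ the convention $S^+_k(G)=S^+_n(G)$ agreed before Lemma \ref{lem25} lets us assume $k\le n$ without loss. Likewise $S^+_k(K_2)$ should be read as $S^+_k$ of $K_2$ padded with isolated vertices, i.e. $\min(k,2)$ summands of the multiset $\{2,0\}$; since we only ever need $S^+_k(K_2)\le 2$ this causes no trouble. I expect the main (minor) obstacle to be making sure Lemma \ref{lem25} is applied with the subgraphs embedded in the common vertex set of size $n$, but that is precisely how Lemma \ref{lem25} is phrased, so no extra work is needed. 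I would present the proof as: choose a spanning tree, write the two-line chain of inequalities, and plug in (1).
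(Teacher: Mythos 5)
Your proposal is correct and is essentially identical to the paper's own proof: the authors also take a spanning tree $T$, apply Lemma \ref{lem25} to write $S^+_k(G)\leq S^+_k(T)+[e(G)-e(T)]S^+_k(K_2)$, and then invoke Inequality (1). The bookkeeping points you raise (padding with isolated vertices, $S^+_k(K_2)=2$) are handled implicitly in the paper and cause no issues.
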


\begin{proof}
Note that $G$ is connected. Thus $G$ have a spanning tree, denote by $T.$ From the Inequality (1) and Lemma \ref{lem25}, we have

\hskip.5cm $S^+_k(G)\leq S^+_k(T)+[e(G)-e(T)]S^+_k(K_2)$
\vskip0.1cm
\hskip1.8cm $\leq e(T)+2k-1-\frac{2k-2}{n}+2[e(G)-e(T)]$
\vskip0.1cm
\hskip1.8cm $=2e(G)+2k-n-\frac{2k-2}{n}.$
\end{proof}

\begin{cor}\label{cor35}
Let $G$ be a connected graph with $n$ vertices. Then for an integer $k$ with $\frac{3n-4+\sqrt{8n^2e(G)-8n^3+9n^2-8n+16}}{2n}\leq k\leq n,$ we have
\vskip0.2cm
\hskip.5cm $S^+_k(G)\leq e(G)+\binom{k+1}{2}.$
\end{cor}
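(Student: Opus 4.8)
The plan is to derive Corollary \ref{cor35} directly from Theorem \ref{thm34} by a purely algebraic manipulation: we want to find exactly the range of $k$ for which the upper bound $2e(G)+2k-n-\frac{2k-2}{n}$ produced by Theorem \ref{thm34} is already $\leq e(G)+\binom{k+1}{2}$. So first I would write down the inequality we need,
\[
2e(G)+2k-n-\frac{2k-2}{n}\leq e(G)+\binom{k+1}{2},
\]
and rearrange it into a quadratic inequality in $k$. Using $\binom{k+1}{2}=\frac{k^2+k}{2}$ and clearing the denominator $2n$, the inequality becomes
\[
4ne(G)+4nk-2n^2-4k+4\leq 2ne(G)+nk^2+nk,
\]
i.e.
\[
nk^2+nk-4nk+4k-2ne(G)+2n^2-4\geq 0,
\]
which simplifies to
\[
nk^2-(3n-4)k-2ne(G)+2n^2-4\geq 0.
\]

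Next I would solve this quadratic in $k$. Its discriminant is
\[
(3n-4)^2-4n(2n^2-4-2ne(G))=9n^2-24n+16-8n^3+16n+8n^2e(G)=8n^2e(G)-8n^3+9n^2-8n+16,
\]
which is exactly the quantity under the square root in the statement. Since the leading coefficient $n>0$, the quadratic is nonnegative precisely when $k$ lies outside the two roots; the larger root is
\[
k_0=\frac{(3n-4)+\sqrt{8n^2e(G)-8n^3+9n^2-8n+16}}{2n},
\]
so the inequality holds for all $k\geq k_0$. Combining $k\geq k_0$ with the standing restriction $k\leq n$ and applying Theorem \ref{thm34} then gives $S^+_k(G)\leq e(G)+\binom{k+1}{2}$ in that range, which is exactly the claimed statement.

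The only genuine obstacle is a bookkeeping one: making sure the discriminant simplification is carried out correctly sign-by-sign so that it matches the radicand written in the corollary, and noting (if desired) that the range is nonempty — i.e. that $k_0\leq n$ can actually happen — although strictly speaking the corollary is a conditional statement and needs no nonemptiness check. I would also remark that since $G$ is connected we have $e(G)\geq n-1$, so the radicand is positive and $k_0$ is a real number, and that the smaller root is irrelevant because it is dominated by the trivial constraint $k\geq 1$ together with the fact that we only care about the upper tail. No deeper idea is needed: the corollary is simply the explicit solution set of the quadratic inequality obtained from Theorem \ref{thm34}.
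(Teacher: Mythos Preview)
Your proposal is correct and follows essentially the same route as the paper: both derive the result from Theorem \ref{thm34} by rewriting the inequality $2e(G)+2k-n-\frac{2k-2}{n}\leq e(G)+\binom{k+1}{2}$ as a quadratic condition in $k$ (the paper phrases the equivalent condition as $e(G)\leq n+\frac{2k-2}{n}+\frac{k^2-3k}{2}$ rather than solving for the root explicitly, but this is the same computation). Your extra remarks on the positivity of the radicand and the irrelevance of the smaller root are correct side observations not present in the paper's terse proof.
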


\begin{proof}
Note that  $\frac{3n-4+\sqrt{8n^2e(G)-8n^3+9n^2-8n+16}}{2n}\leq k\leq n,$ which implies that $e(G)\leq n+\frac{2k-2}{n}+\frac{k^2-3k}{2}.$ Then by Theorem \ref{thm34},

\hskip.5cm $S^+_k(G)\leq 2e(G)+2k-n-\frac{2k-2}{n}$

\hskip1.75cm $\leq e(G)+n+\frac{2k-2}{n}+\frac{k^2-3k}{2}+2k-n-\frac{2k-2}{n}$

\hskip1.75cm $=e(G)+\binom{k+1}{2}.$
\end{proof}

\begin{rem}\label{rem36}
Corollary \ref{cor35} shows that for connected graphs with $n$ vertices, Conjecture \ref{con21} is true when $k$ is sufficiently large.
\end{rem}

\begin{thm}\label{thm37}
Let $n,k$ be positive integers with $1\leq k\leq n,$ and $G$ be a graph with $n$ vertices and without isolated vertices. Then
\vskip0.2cm
\hskip.5cm $S^+_k(G)\leq 2e(G)+2k-n.$
\end{thm}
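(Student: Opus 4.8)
The plan is to reduce the statement to the spanning-tree bound already available. Since $G$ has no isolated vertices, every connected component has at least two vertices and hence contains a spanning tree; more useful here is that $G$ itself contains a spanning forest $F$ whose components are spanning trees of the components of $G$. If $G$ has $c$ components with $n_1,\dots,n_c$ vertices (each $n_j\ge 2$), then $F$ has exactly $n-c$ edges, and $E(G)\setminus E(F)$ has $e(G)-n+c$ edges. I would apply Lemma \ref{lem25} with the subgraphs $F$ (or rather its components) together with one copy of $K_2$ for each remaining edge.

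First I would bound $S^+_k(F)$. By Lemma \ref{lem22} it suffices to bound $S^+_k$ on each tree component and sum; but in fact I only need the crude estimate $S^+_k(T)\le e(T)+2k-1$ valid for any tree $T$ (this is Inequality (1) with the negative term dropped, and it already appears in the excerpt). Summing over the $c$ tree components $T_1,\dots,T_c$ of $F$ and using Lemma \ref{lem25} gives
\[
S^+_k(F)\le \sum_{j=1}^{c}S^+_k(T_j)\le \sum_{j=1}^{c}\bigl(e(T_j)+2k-1\bigr)=(n-c)+c(2k-1).
\]
Then, again by Lemma \ref{lem25} applied to $F$ and the $e(G)-n+c$ extra edges (each contributing $S^+_k(K_2)=2$),
\[
S^+_k(G)\le S^+_k(F)+2\bigl(e(G)-n+c\bigr)\le (n-c)+c(2k-1)+2e(G)-2n+2c.
\]
The right-hand side simplifies to $2e(G)-n+2ck-c = 2e(G)-n+c(2k-1)$, so I would still need to control the factor $c$.

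The point is that $c\le k$ forces the claimed bound: when $c\le k$ we get $c(2k-1)\le k(2k-1)$... but that is not quite $2k$, so I need to be more careful and keep the sharper tree estimate on at least the worst component, or argue componentwise. The cleanest route is: by Lemma \ref{lem22} it suffices to prove the theorem for connected $G$ (then sum over components, checking that summing the inequalities $S^+_k(G_j)\le 2e(G_j)+2k-n_j$ is consistent — here one must be slightly careful since $2k$ does not add up, but for a single component $n=n_j$ and one recovers exactly Theorem's form with $k\le n$ automatically, and for the union one uses instead that each component separately satisfies Conjecture-type bounds). So I would restrict to $G$ connected, take a spanning tree $T$, and run exactly the two-line computation
\[
S^+_k(G)\le S^+_k(T)+2\bigl(e(G)-e(T)\bigr)\le \bigl(e(T)+2k-1\bigr)+2\bigl(e(G)-(n-1)\bigr)=2e(G)+2k-n-1< 2e(G)+2k-n,
\]
using Inequality (1) in the weak form $S^+_k(T)\le e(T)+2k-1$. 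This already gives the connected case with room to spare, and the general case follows because any graph without isolated vertices is a disjoint union of connected graphs on $\ge 2$ vertices, and $S^+_k$ of a disjoint union is the sum (restricted to the top $k$ eigenvalues) — here one invokes the convention $S^+_k(G)=S^+_n(G)$ for $k>n$ together with Lemma \ref{lem24}/Lemma \ref{lem25} to assemble the components.

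The main obstacle is the bookkeeping in passing from connected graphs to disconnected ones: the term $2k-n$ is not additive over components the way $e(G)$ is, so a naive "sum the component inequalities" fails. I expect to resolve this by observing that for a graph with $c\ge 2$ components none of which is trivial, one has $n\ge 2c$, and then showing directly via the spanning-forest argument above that $S^+_k(G)\le 2e(G)+2k-n$ still holds — the slack of $-1$ per component in the connected estimate is exactly what absorbs the discrepancy. Alternatively, and more simply, I would just do the spanning-forest computation in full generality from the start (not reducing to connected), carefully tracking that the number of forest edges is $n-c$ and that $c\le k$ or else a short separate argument handles $c>k$; this avoids any appeal to additivity and keeps the proof to a few lines.
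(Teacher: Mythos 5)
Your connected case is fine and is essentially the paper's own argument: take a spanning tree, apply the tree bound $S^+_k(T)\le e(T)+2k-1$ (Inequality (1)), and pay $2$ per extra edge via Lemma \ref{lem25}; this is exactly Theorem \ref{thm34} followed by dropping the $-\frac{2k-2}{n}$ term. The problem is the disconnected case, which is the only nontrivial content of this theorem, and there your proposal has a genuine gap. Your spanning-forest computation yields $S^+_k(G)\le 2e(G)-n+c(2k-1)$, and your claimed rescue --- that ``the slack of $-1$ per component in the connected estimate is exactly what absorbs the discrepancy'' --- is false: you need $c(2k-1)\le 2k$, i.e.\ $2k(c-1)\le c$, which fails already for $c=k=2$ (discrepancy $2$). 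The slack is only $c$, while the overshoot is $2k(c-1)$, so no amount of per-component slack of size $O(1)$ fixes this. Likewise, Lemma \ref{lem22} cannot be invoked to reduce to the connected case here, since that lemma is about the bound $e(G)+\binom{k+1}{2}$, which \emph{is} superadditive in the right way; the bound $2e(G)+2k-n$ is not, precisely because the $2k$ term does not split over components.

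The missing idea is to distribute $k$ over the components rather than charging each component the full $2k$. Let $G_1,\dots,G_t$ be the components and let $k_i$ be the number of the $k$ largest signless Laplacian eigenvalues of $G$ that come from $\sigma(Q(G_i))$, so $\sum_i k_i=k$ and $S^+_k(G)=\sum_{i}S^+_{k_i}(G_i)$ exactly (not merely $\le$). For each component with $k_i\ge 1$ apply the connected bound with parameter $k_i$, giving $S^+_{k_i}(G_i)\le 2e(G_i)+2k_i-n_i$; now the terms $2k_i$ sum to $2k$ as required. For the components with $k_i=0$, use that each is connected with at least one edge (no isolated vertices), so $e(G_i)\ge n_i-1\ge 1$ and hence $2e(G_i)-n_i\ge e(G_i)-1\ge 0$; these nonnegative leftovers are exactly what lets you replace $2e(H)-|V(H)|$ (summed over the contributing components $H$) by $2e(G)-n$. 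This is the paper's proof. You did gesture at ``$S^+_k$ of a disjoint union is the sum restricted to the top $k$ eigenvalues,'' which is the right observation, but you never used it; carried through, it closes the gap.
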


\begin{proof}
If $G$ is connected, then by Theorem \ref{thm34}, $S_k^+(G)\leq 2e(G)+2k-n-\frac{2k-2}{n}< 2e(G)+2k-n.$

Now suppose that $G$ is not connected. Let $G_1, G_2,\ldots,G_t$ be all the components of $G.$ Suppose that $k_i$ of the $k$ largest signless
Laplacian eigenvalues of $\cup_{i=1}^tG_i$ come from $\sigma(Q(G_i)),$ where $i=1,2,\ldots,t$ and $\sum_{i=1}^tk_i=k.$
Without loss of generality, suppose that $k_1,k_2,\ldots,k_r>0$ and $k_{r+1}=k_{r+2}=\ldots=k_t=0$ where $1\leq r\leq t.$
Then $S^+_k(G)=\sum_{i=1}^rS^+_k(G_i).$ Let $H=\cup_{i=1}^rG_i$ and $n_i=|V(G_i)|$ for $i=1,2,\ldots,t.$ Clearly,

\hskip.5cm $S^+_k(G)=S^+_k(H)=\sum\limits_{i=1}^rS^+_{k_i}(G_i)\leq\sum\limits_{i=1}^r[2e(G_i)+2k_i-n_i]=2e(H)+2k-|V(H)|.$



\noindent Note that $e(G_i)\geq1$ for $r+1\leq i\leq t$ since $G$ has no isolated vertices. Then for  $r+1\leq i\leq t,$

\hskip.5cm $2e(G_i)-n_i\geq e(G_i)+(n_i-1)-n_i=e(G_i)-1\geq0,$

\noindent which implies that $2e(G)-n\geq 2e(H)-|V(H)|.$ Thus, $S^+_k(G)\leq 2e(G)+2k-n.$
\end{proof}

\section{Conjecture \ref{con21} for unicyclic, bicyclic and tricyclic graphs }
\hskip.6cm
In this section, we prove that Conjecture \ref{con21} is true for unicyclic and bicyclic graphs with $n$ vertices for all integer $k$, 
and  for tricyclic graphs with $n$ vertices when $k\neq 3,$ where $1\leq k\leq n.$

\begin{thm}\label{thm41}
Let $n, k$ be positive integers with  $1\leq k\leq n,$ and $G$ be an unicyclic graph with $n$ vertices. Then
\vskip0.2cm
\hskip.5cm $S^+_k(G)\leq e(G)+\binom{k+1}{2}.$
\end{thm}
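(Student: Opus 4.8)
The plan is to reduce to the situation of Corollary \ref{cor26}: assuming Conjecture \ref{con21} fails for a unicyclic graph, we may pick a counterexample $G$ that is edge-minimal, so that $S^+_k(H)>e(H)$ for every nonempty subgraph $H$ of $G$. Since an edge-minimal counterexample is connected (otherwise by Lemma \ref{lem22} one of its components is a smaller counterexample), $G$ is a connected unicyclic graph, i.e.\ $e(G)=n$ and $G$ has a unique cycle $C_g$ of some length $g\ge 3$. The small values of $k$ are handled by the known results cited in the introduction: Conjecture \ref{con11} is true for $k=1,2$ and for $k=n-1,n$, so we may assume $3\le k\le n-2$ (in particular $n\ge 5$). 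The large values of $k$ are handled by Corollary \ref{cor35}: since $e(G)=n$, the threshold $\frac{3n-4+\sqrt{8n^2e(G)-8n^3+9n^2-8n+16}}{2n}$ simplifies (with $e(G)=n$ the radicand becomes $n^2+\ldots$, giving a threshold of order $\sqrt{n}$), so the conjecture holds for all $k$ above roughly $\tfrac{3}{2}+\sqrt{n}$. Thus only a bounded-from-above (in terms of $\sqrt n$) middle range of $k$ remains, and for those $k$ the bound $\binom{k+1}{2}$ is already fairly generous compared to $e(G)=n$.

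For the remaining middle range I would split $G$ along its cycle: write $E(G)=E(C_g)\cup E(F)$ where $F=G-E(C_g)$ is a forest (a disjoint union of trees). By Lemma \ref{lem25}, $S^+_k(G)\le S^+_k(C_g)+S^+_k(F)$. For the forest part, Lemma \ref{lem23} plus inequality (1) gives $S^+_k(F)\le e(F)+2k-1-\frac{2(k-1)}{|V(F)|}$ componentwise (one must be a little careful splitting $k$ across the tree components, exactly as in the proof of Theorem \ref{thm37}), so in any case $S^+_k(F)\le e(F)+2k-1$. For the cycle, $Q(C_g)$ has eigenvalues $2+2\cos(2\pi j/g)$, $j=0,\dots,g-1$, which are all at most $4$ and sum to $2g$; a crude bound is $S^+_k(C_g)\le \min\{4k,\,2g\}\le 2e(C_g)=2g$ when $k\ge g/2$, and $S^+_k(C_g)\le 4k$ always. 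Combining, $S^+_k(G)\le e(F)+2k-1+\min\{4k,2g\}$, and since $e(F)=n-g$ this gives $S^+_k(G)\le n-g+2k-1+4k = e(G)+6k-g-1$ or, using the other cycle bound, $S^+_k(G)\le n+g+2k-1 = e(G)+g+2k-1$. Neither of these alone beats $e(G)+\binom{k+1}{2}$ for all small $k$, so sharper estimates of $S^+_k(C_g)$ (e.g.\ summing the $k$ largest values $2+2\cos\theta$ explicitly, which is $2k + \frac{\sin(k\pi/g)}{\sin(\pi/g)}$-type expressions) will be needed, together with the fact that $g\le n$.

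The main obstacle, I expect, is precisely the interplay between $k$ and the cycle length $g$ in this middle range: when $g$ is large (close to $n$) the forest $F$ is small and the cycle dominates, whereas when $g$ is small the forest dominates; one needs a single estimate of $S^+_k(C_g)$ good enough to close both regimes simultaneously against the quadratic budget $\binom{k+1}{2}\sim k^2/2$. Since $\binom{k+1}{2}$ grows quadratically in $k$ while all the linear-in-$k$ bounds above suffice for $k$ bounded below by a small constant, the genuinely delicate cases are a handful of small $k$ (here $k=3,4$, maybe $5$), where I would use the exact spectrum of $C_g$ and the refined tree bound, possibly refining Corollary \ref{cor26} to assume additionally that $G$ has no pendant structure that could be stripped off. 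If a clean uniform argument resists, the fallback is to treat $k=3$ and $k=4$ by hand using $S^+_3(C_g)\le 6+2\cos(2\pi/g)\le 8$ and the sharper tree inequality (1) with its $-\frac{2k-2}{n}$ correction term, which for $e(G)=n$ and these $k$ is exactly what tips the sum below $e(G)+\binom{k+1}{2}$.
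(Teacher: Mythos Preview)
Your plan overcomplicates matters because of an arithmetic slip in evaluating the threshold from Corollary~\ref{cor35}. With $e(G)=n$ the radicand is
\[
8n^2e(G)-8n^3+9n^2-8n+16 = 8n^3-8n^3+9n^2-8n+16 = 9n^2-8n+16,
\]
not ``$n^2+\dots$'', and it does not produce a threshold of order $\sqrt{n}$. In fact
\[
\frac{3n-4+\sqrt{9n^2-8n+16}}{2n} < 3
\quad\Longleftrightarrow\quad
\sqrt{9n^2-8n+16} < 3n+4
\quad\Longleftrightarrow\quad
-8n < 24n,
\]
which holds for every $n\ge 1$. So Corollary~\ref{cor35} already covers \emph{every} $k\ge 3$, and together with the known cases $k=1,2$ from \cite{2013} the theorem is immediate. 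This is exactly the paper's proof.

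There is therefore no ``middle range'' at all, and your cycle/forest decomposition, the estimates on $S^+_k(C_g)$, and the proposed case-by-case treatment of $k=3,4,5$ are all unnecessary. The moral: before launching into a structural argument, carry out the algebra in the quantitative bound you already have---here the spanning-tree bound (Theorem~\ref{thm34}) is strong enough on its own once $e(G)=n$ is plugged in.
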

\begin{proof}
The cases of $k=1$ and $k=2$ have been proved in \cite{2013}. Now we show $k\geq3$ holds.

Since $G$ is an unicyclic graph, $e(G)=n$. Then
\vskip0.1cm
\hskip.5cm $\frac{3n-4+\sqrt{8n^2e(G)-8n^3+9n^2-8n+16}}{2n}=\frac{3n-4+\sqrt{9n^2-8n+16}}{2n}< 3\leq k.$
\vskip0.1cm

 Then by Corollary \ref{cor35}, we have $S^+_k(G)\leq e(G)+\binom{k+1}{2}$ for $k=1, 2, \ldots, n.$
\end{proof}

\begin{thm}\label{thm42}
Let $n,k$ be positive integers with $1\leq k\leq n$ and $k\neq3,$ and $G$ be a bicyclic (respectively tricyclic) graph with $n$ vertices. Then
\vskip0.2cm
\hskip.5cm $S^+_k(G)\leq e(G)+\binom{k+1}{2}.$
\end{thm}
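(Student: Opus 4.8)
The plan is to mimic the proof of Theorem~\ref{thm41} and reduce the whole statement to Corollary~\ref{cor35}. For a bicyclic graph on $n$ vertices one has $e(G)=n+1$, and for a tricyclic graph $e(G)=n+2$; substituting these values into the threshold $\frac{3n-4+\sqrt{8n^2e(G)-8n^3+9n^2-8n+16}}{2n}$ appearing in Corollary~\ref{cor35} collapses the radicand to the simple quadratics $17n^2-8n+16$ and $25n^2-8n+16$ respectively. These are positive for every $n$, so the threshold is a well-defined real number in both cases.

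The proof then splits into two ranges of $k$. For $k=1$ and $k=2$, Conjecture~\ref{con11} already holds for every graph by \cite{2013}, so in particular for $G$; together with the hypothesis $k\neq 3$ this leaves only $k\geq 4$ to treat. For $k\geq 4$ I would show that the threshold of Corollary~\ref{cor35} is always strictly less than $4$: in the bicyclic case this is $\sqrt{17n^2-8n+16}<5n+4$, which, the right-hand side being positive, squares to the trivially true $0<8n^2+48n$; in the tricyclic case it is $\sqrt{25n^2-8n+16}<5n+4$, i.e.\ $0<48n$. Since both radicands are positive, these equivalences are legitimate. Hence $k\geq 4$ exceeds the threshold, so Corollary~\ref{cor35} applies and gives $S^+_k(G)\leq e(G)+\binom{k+1}{2}$. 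Combining the two ranges finishes the proof.

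As for difficulties: there is essentially no serious obstacle, since all the analytic content is absorbed into Theorem~\ref{thm34} and Corollary~\ref{cor35}, which ultimately rest on the Brouwer-type tree bound~(1) and Lemma~\ref{lem25}. The only steps needing a line of care are the positivity of the two radicands and the reversibility of the squaring step, both immediate. The single genuinely resistant case is $k=3$ for bicyclic and tricyclic graphs, where for large $n$ the threshold in Corollary~\ref{cor35} exceeds $3$ and the spectral-majorization approach no longer suffices on its own; this is exactly the case excluded from the statement, so it plays no role here.
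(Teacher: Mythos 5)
Your proposal is correct and follows essentially the same route as the paper: handle $k=1,2$ by the known results of \cite{2013}, and for $k\geq 4$ substitute $e(G)=n+1$ (resp. $n+2$) into the threshold of Corollary~\ref{cor35} and check it is below $4$. The algebra (radicands $17n^2-8n+16$ and $25n^2-8n+16$, and the squaring step) matches the paper's computation.
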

\begin{proof}
The cases of $k=1$ and $k=2$ have been proved in \cite{2013}. Now we show $k\geq4$ holds.

Since $G$ is a bicyclic (respectively tricyclic) graph, $e(G)=n+1$ (respectively $e(G)=n+2$). Then
\vskip0.1cm
\hskip0.5cm $\frac{3n-4+\sqrt{17n^2-8n+16}}{2n}<\frac{3n-4+\sqrt{25n^2-8n+16}}{2n}< 4\leq k.$
\vskip0.1cm

Then by Corollary \ref{cor35}, we have $S^+_k(G)\leq e(G)+\binom{k+1}{2}$ for  $1\leq k\leq n$ with $k\neq3.$
\end{proof}

By Theorem \ref{thm42} and the fact that Conjecture \ref{con21} is true for all graphs with at most ten vertices, to show Conjecture \ref{con21} is true for bicyclic graphs with $n$ vertices,
we only need to show that it is true for bicyclic graphs for $k=3$ and $n\geq11.$ The following notations, put forward in \cite{2012a}, and lemmas are needed in the proofs of
our main results.

Let $G_1\sim G_2$ denote the graph obtained from $G_1$ and $G_2$ by connecting a vertex of $G_1$ with a vertex of $G_2.$ Let $G_1\approx G_2$ denote the graph obtained from $G_1$ and $G_2$ by inserting two edges between $V(G_1)$ and $V(G_2).$ The following two lemmas show that if Conjecture \ref{con21} holds for $G_1$ and $G_2,$ then the Conjecture \ref{con21} is also true for $G_1\sim G_2$ and $G_1\approx G_2.$

\begin{lem}\label{lem43}
Let $G_i$  be a nonempty connected graph with $n_i$ vertices, where $i=1,2.$ If $S^+_k(G_i)\leq e(G_i)+\binom{k_i+1}{2}$ for $k_i=1,2,\ldots, n_i$ and $i=1,2,$ then for  $k=1, 2, \ldots, n_1+n_2,$
\vskip0.2cm
\hskip.5cm $S^+_k(G_1\sim G_2)\leq e(G_1\sim G_2)+\binom{k+1}{2}.$
\end{lem}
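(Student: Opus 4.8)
The plan is to realize $G:=G_1\sim G_2$ as an edge-disjoint union of three pieces — namely $G_1$ (on $n_1$ vertices), $G_2$ (on $n_2$ vertices), and the single bridge $K_2$ joining them — and then apply Lemma \ref{lem25} in the form
\[
S^+_k(G)\;\le\; S^+_k(G_1)+S^+_k(G_2)+S^+_k(K_2)\;\le\; S^+_k(G_1)+S^+_k(G_2)+2 .
\]
Note $e(G)=e(G_1)+e(G_2)+1$, so the target inequality $S^+_k(G)\le e(G)+\binom{k+1}{2}$ will follow once we show
\[
S^+_k(G_1)+S^+_k(G_2)\;\le\; e(G_1)+e(G_2)+\binom{k+1}{2}-1 .
\]
The hypothesis gives $S^+_k(G_i)\le e(G_i)+\binom{k_i+1}{2}$ for every $k_i\le n_i$; the point is to choose the split $k=k_1+k_2$ cleverly and to absorb the ``$-1$''.

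Concretely, I would split $k$ into $k_1+k_2$ so that the two parts are as equal as possible, i.e. $\{k_1,k_2\}=\{\lceil k/2\rceil,\lfloor k/2\rfloor\}$, truncating each $k_i$ at $n_i$ if $k$ is large (when $k>n_1+n_2$ we are in the convention $S^+_k(G)=S^+_{n}(G)$ and the statement is covered by the $k=n$ case of Conjecture \ref{con21} for $G$, which we can obtain from the components as well). The key elementary inequality to verify is the convexity-type bound
\[
\binom{\lceil k/2\rceil+1}{2}+\binom{\lfloor k/2\rfloor+1}{2}\;\le\;\binom{k+1}{2}-1\qquad\text{for }k\ge 2,
\]
together with a direct check for $k=1$ (where one of the $G_i$ contributes $0$ to $S^+_k$ only if it is edgeless, but both $G_i$ are nonempty, so $S^+_1(G_i)\ge \text{something};$ actually for $k=1$ we instead use $S^+_1(G)=q_1(G)\le q_1(G_1\cup G_2\cup K_2)$ directly, or simply note $k=1$ is the already-known case from \cite{2013}). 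Plugging the split into the hypotheses then yields exactly
\[
S^+_k(G)\le e(G_1)+e(G_2)+\binom{k_1+1}{2}+\binom{k_2+1}{2}+2\le e(G)+\binom{k+1}{2},
\]
as desired.

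There is one genuine subtlety rather than pure computation: when $k$ is so large that the naive half-split forces $k_i>n_i$ for one index, the term $\binom{k_i+1}{2}$ in the hypothesis is not available, and one must instead set that $k_i=n_i$ and push the remaining budget into the other component, checking that $S^+_{n_i}(G_i)=2e(G_i)$ (the full trace of $Q(G_i)$) still leaves enough room. So the main obstacle I anticipate is the bookkeeping in this ``unbalanced'' regime: one needs the inequality
\[
2e(G_i)+\Big(e(G_{3-i})+\binom{k-n_i+1}{2}\Big)+2\;\le\;e(G)+\binom{k+1}{2},
\]
i.e. $e(G_i)+1\le \binom{k+1}{2}-\binom{k-n_i+1}{2}$, which should hold because $\binom{k+1}{2}-\binom{k-n_i+1}{2}\ge n_i\cdot\frac{(k+1)+(k-n_i+1)-1}{2}\ge n_i\,n_{3-i}+\cdots$ is comfortably larger than $e(G_i)+1$ once $k$ is in this range; but making this uniformly valid for all admissible $k$ and all $n_1,n_2$ is where care is required. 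Everything else is a routine substitution into Lemma \ref{lem25} and the known $\sigma(Q(K_2))=\{2,0\}$.
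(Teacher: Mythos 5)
Your overall strategy (edge-disjoint decomposition into $G_1$, $G_2$, and the bridge $K_2$, then Lemma \ref{lem25}) matches the paper's, but the central step is not valid: you cannot \emph{choose} the split $k=k_1+k_2$. After writing $S^+_k(G_1\sim G_2)\le S^+_k(G_1\cup G_2)+S^+_k(K_2)$, the quantity $S^+_k(G_1\cup G_2)$ is the sum of the $k$ largest elements of the combined spectrum, which equals $\max_{k_1+k_2=k}\bigl(S^+_{k_1}(G_1)+S^+_{k_2}(G_2)\bigr)$; for an arbitrary prescribed split one only gets $S^+_k(G_1\cup G_2)\ge S^+_{k_1}(G_1)+S^+_{k_2}(G_2)$, the wrong direction. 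So the balanced split $\{\lceil k/2\rceil,\lfloor k/2\rfloor\}$ produces an underestimate, not an upper bound (e.g.\ $G_1=K_5$, $G_2=K_2$, $k=2$: the two largest eigenvalues of the union are $8$ and $3$, summing to $11$, while the balanced split gives only $8+2=10$). Your literal first display, with both terms at level $k$, is valid but too weak: for $G_1=G_2=K_3$ and $k=2$ it gives $5+5+2=12$, above the target $e(G)+\binom{3}{2}=7+3=10$.

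The paper instead takes $k_1,k_2$ to be the split actually realized by the spectra (so $k_i\le n_i$ automatically, and the truncation bookkeeping you anticipate never arises) and treats two regimes. If $k_1k_2\ge1$, then $\binom{k_1+1}{2}+\binom{k_2+1}{2}+1\le\binom{k+1}{2}$, which absorbs the bridge's contribution of $2$ together with the extra edge. If one $k_i=0$, that binomial inequality fails --- one would need $\binom{k+1}{2}+0\le\binom{k+1}{2}-1$ --- and the $+2$ from $S^+_k(K_2)$ is instead absorbed by $e(G_{3-i})\ge1$, i.e.\ by the hypothesis that both $G_i$ are nonempty. Your proposal never confronts this fully unbalanced case $(k_1,k_2)=(k,0)$, which is precisely where the ``$-1$'' cannot be recovered from the binomial terms and the nonemptiness of the unused component is essential.
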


\begin{proof}
Assume that $k_i$ of the $k$ largest signless Laplacian eigenvalues of $G_1\cup G_2$ come from $\sigma(Q(G_i)),$ where $i=1,2$ and $k_1+k_2=k.$ Since $G_i$ is nonempty, $e(G_i)\geq1$ and $e(G_1\sim G_2)=e(G_1)+e(G_2)+1\geq e(G_i)+2$ for $i=1,2.$

{\bf Case 1: } $k_1k_2=0.$

Without loss of generality, suppose that $k_2=0,$ then $k_1=k$. By Lemma \ref{lem25},

\hskip.5cm $S^+_k(G_1\sim G_2)\leq S_k^+(G_1\cup G_2)+S^+_k(K_2)$
\vskip0.1cm
\hskip3cm $=S^+_k(G_1)+S^+_k(K_2)$
\vskip0.1cm
\hskip3cm $\leq (e(G_1)+\binom{k_1+1}{2})+2$
\vskip0.1cm
\hskip3cm $\leq e(G_1\sim G_2)+\binom{k+1}{2}.$

{\bf Case 2: } $k_1k_2\neq0.$

Clearly, $k_1k_2\geq1.$ Then by Lemma \ref{lem25},

\hskip.5cm $S^+_k(G_1\sim G_2)\leq S^+_k(G_1\cup G_2)+S^+_k(K_2)$

\hskip3cm $=\sum\limits_{i=1}^2S^+_{k_i}(G_i)+S^+_k(K_2)$

\hskip3cm $\leq\sum\limits_{i=1}^2[e(G_i)+\binom{k_i+1}{2}]+2$

\hskip3cm $=e(G_1\sim G_2)+\frac{k_1^2+k_2^2+k_1+k_2+2}{2}$
\vskip0.1cm
\hskip3cm $\leq e(G_1\sim G_2)+\frac{(k_1+k_2)^2+k_1+k_2}{2}$
\vskip0.1cm
\hskip3cm $= e(G_1\sim G_2)+\binom{k+1}{2}.$
\end{proof}

Let $T$ be an induced subtree of $G.$ If $G$ can be formed by connecting a vertex of $T$ with a vertex of $G-T,$ then $T$ is called a hanging tree.

\begin{cor}\label{cor44}
Let $G$ be a graph with $n(\geq4)$ vertices such that there is a hanging tree $T$ of $G$ with $|V(T)|\geq2.$ Suppose $|V(G-T)|=s,$
where $2\leq s\leq n-2.$ If $G-T$ is nonempty and $S^+_{l}(G-T)\leq e(G-T)+\binom{l+1}{2}$ for $l=1,2,\ldots,s,$ then for $k=1,2,\ldots,n,$
\vskip0.2cm
\hskip0.5cm $S^+_k(G)\leq e(G)+\binom{k+1}{2}.$
\end{cor}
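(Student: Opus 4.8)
The plan is to reduce this to Lemma \ref{lem43}. Since $T$ is a hanging tree of $G$, by definition $G$ is obtained from $T$ and $G-T$ by connecting a vertex of $T$ with a vertex of $G-T$; that is, $G = T \sim (G-T)$ in the notation just introduced. Both $T$ and $G-T$ are nonempty connected graphs (here $T$ is a tree with $|V(T)|\geq 2$, hence connected and with $e(T)\geq 1$, and $G-T$ is assumed nonempty; connectedness of $G-T$ should either be part of the hypothesis implicitly, or one applies the argument componentwise via Lemma \ref{lem22}). So the two hypotheses of Lemma \ref{lem43} that need checking are the per-graph inequalities $S^+_{k_i}(G_i)\leq e(G_i)+\binom{k_i+1}{2}$ for $i=1,2$ with $G_1=T$, $G_2=G-T$.

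For $G_2 = G-T$, the required bound $S^+_{l}(G-T)\leq e(G-T)+\binom{l+1}{2}$ for $l=1,\ldots,s$ is exactly the hypothesis of the corollary. For $G_1 = T$, since $T$ is a tree, Conjecture \ref{con21} holds for $T$: by Lemma \ref{lem23} the signless Laplacian and Laplacian spectra of the bipartite graph $T$ coincide, and by Inequality (1) we have $S^+_k(T)\leq e(T)+2k-1-\tfrac{2k-2}{n} < e(T)+\binom{k+1}{2}$ for all $k$. Hence both inputs to Lemma \ref{lem43} are met, and Lemma \ref{lem43} yields $S^+_k(G)\leq e(G)+\binom{k+1}{2}$ for $k=1,2,\ldots,|V(T)|+|V(G-T)| = n$, which is the claim.

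I expect the only real subtlety to be bookkeeping rather than a genuine obstacle: one must confirm that $G-T$ being an induced subgraph on $s$ vertices with $2\leq s\leq n-2$ and $|V(T)|\geq 2$ does place us squarely in the hypotheses of Lemma \ref{lem43} (in particular that $G-T$ is connected, or else splitting into components and invoking Lemma \ref{lem22} plus the per-component hypothesis), and that the hypothesis "$S^+_l(G-T)\leq e(G-T)+\binom{l+1}{2}$ for $l=1,\ldots,s$" is precisely "Conjecture \ref{con21} holds for $G-T$" as needed. If $G-T$ is disconnected, one first applies Lemma \ref{lem22} to see the conjecture still holds for $G-T$, then notes that $G$ is obtained from a connected component $G'$ of $G-T$ together with the rest by the operation "$\sim$" applied to $T$ joined to that component, and iterates; but in the intended application $G-T$ will be connected, so this is a minor point. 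The whole argument is essentially a one-line invocation of Lemma \ref{lem43} once the tree case (via Lemma \ref{lem23} and (1)) is recorded.
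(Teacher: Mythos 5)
Your proposal is correct and is essentially identical to the paper's own proof: the authors likewise write $G=(G-T)\sim T$ and invoke Inequality (1) together with Lemma \ref{lem43}. The connectedness bookkeeping you flag is real but harmless, since the proof of Lemma \ref{lem43} only uses that each piece is nonempty and satisfies the eigenvalue-sum bound.
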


\begin{proof}
It is clearly that $G=(G-T)\sim T.$ Then by Inequality (1) and Lemma \ref{lem43}, the result holds.
\end{proof}

\begin{lem}\label{lem44}
Let $G_i$  be a graph with $n_i$ vertices and $e(G_i)\geq 2,$ where $i=1,2$. If $S^+_k(G_i)\leq e(G_i)+\binom{k_i+1}{2}$ for $k_i=1,2,\ldots, n_i$
and $i=1,2,$ then for $k=1,2,\ldots,n_1+n_2,$
\vskip.2cm
\hskip.5cm $S^+_k(G_1\approx G_2)\leq e(G_1\approx G_2)+\binom{k+1}{2}.$
\end{lem}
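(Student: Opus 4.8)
\textbf{Proof proposal for Lemma \ref{lem44}.}

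The plan is to mimic the proof of Lemma \ref{lem43}, splitting into cases according to how the top $k$ signless Laplacian eigenvalues of $G_1\cup G_2$ distribute between the two spectra, and then to absorb the contribution of the two connecting edges by invoking Lemma \ref{lem25} with $G_1\approx G_2$ decomposed as $(G_1\cup G_2)$ together with two copies of $K_2$. Concretely, since inserting two edges between $V(G_1)$ and $V(G_2)$ adds at most $2S^+_k(K_2)=4$ to the bound coming from $G_1\cup G_2$, Lemma \ref{lem25} gives $S^+_k(G_1\approx G_2)\le S^+_k(G_1\cup G_2)+2S^+_k(K_2)=S^+_k(G_1\cup G_2)+4$, and $e(G_1\approx G_2)=e(G_1)+e(G_2)+2$. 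So everything reduces to controlling $S^+_k(G_1\cup G_2)$ and comparing the ``$+4$'' against the gain in the binomial term. (One should be slightly careful when $k$ exceeds $n_1$ or $n_2$, using the convention $S^+_k(G)=S^+_n(G)$ fixed in Section 2, but the inequalities $S^+_{k_i}(G_i)\le e(G_i)+\binom{k_i+1}{2}$ still apply with $k_i\le n_i$.)

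The case analysis: write $k=k_1+k_2$ where $k_i$ of the top $k$ eigenvalues of $G_1\cup G_2$ come from $\sigma(Q(G_i))$. \textbf{Case 1:} $k_1k_2=0$, say $k_2=0$, so $k_1=k$. Then $S^+_k(G_1\cup G_2)=S^+_k(G_1)\le e(G_1)+\binom{k+1}{2}$, hence $S^+_k(G_1\approx G_2)\le e(G_1)+\binom{k+1}{2}+4$. Since $e(G_2)\ge 2$, we have $e(G_1)+4\le e(G_1)+e(G_2)+2=e(G_1\approx G_2)$, so the bound follows. \textbf{Case 2:} $k_1k_2\ne0$, i.e.\ $k_1,k_2\ge1$. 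Then
\[
S^+_k(G_1\approx G_2)\le \sum_{i=1}^2\Big(e(G_i)+\binom{k_i+1}{2}\Big)+4
= e(G_1\approx G_2)+\frac{k_1^2+k_2^2+k_1+k_2+4}{2}.
\]
It remains to check $k_1^2+k_2^2+k_1+k_2+4\le (k_1+k_2)^2+(k_1+k_2)=k_1^2+k_2^2+2k_1k_2+k_1+k_2$, i.e.\ $4\le 2k_1k_2$, which holds precisely because $k_1,k_2\ge1$ forces $k_1k_2\ge1$ --- wait, that only gives $2k_1k_2\ge2$. So Case 2 must be split further: if $k_1k_2\ge2$ we are done immediately; the genuinely tight subcase is $k_1=k_2=1$, i.e.\ $k=2$, where the naive estimate fails by exactly $2$. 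This borderline subcase is where the hypothesis $e(G_i)\ge2$ (rather than merely $e(G_i)\ge1$) is essential, and it is the main obstacle.

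To handle $k_1=k_2=1$ (so $k=2$), I would argue more carefully rather than through Lemma \ref{lem25} alone. One option: note that for $k=2$ Conjecture \ref{con21} is already known to hold for all graphs (proved in \cite{2013} for $k=2$), so $S^+_2(G_1\approx G_2)\le e(G_1\approx G_2)+\binom{3}{2}=e(G_1\approx G_2)+3$ holds outright, and we are done without any case analysis for $k=2$. This is the cleanest route and I would use it. Failing that, one could use that $e(G_i)\ge2$ implies each $G_i$ contains either $P_3$ or $2K_2$ as a subgraph, and then estimate $S^+_2(G_i)$ against $e(G_i)+\binom{2}{2}=e(G_i)+1$ with a slack, or bound $S^+_2(G_1\cup G_2)\le q_1(G_1)+q_1(G_2)$ and use $q_1(H)\le \max_{uv\in E(H)}(d_u+d_v)\le e(H)+2$-type estimates, tracking the improvement the extra edge gives. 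In the writeup I would state Case 2 as $k_1k_2\ge2$ (finishing as above) and dispatch $k=2$ separately by citing the $k=2$ result; combined with Case 1 this covers all $k\in\{1,2,\ldots,n_1+n_2\}$, completing the proof.
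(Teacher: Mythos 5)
Your proposal is correct and is essentially the paper's own proof: the paper likewise writes $S^+_k(G_1\approx G_2)\le S^+_k(G_1\cup G_2)+2S^+_k(K_2)$, runs the same two cases, and avoids the tight subcase $k_1=k_2=1$ by first disposing of $k=1$ (trivial) and $k=2$ (known from \cite{2013}) and then assuming $k\ge3$, which forces $k_1k_2\ge2$ in Case 2. Your only slight misstatement is that the hypothesis $e(G_i)\ge2$ is what rescues the $k=2$ borderline --- in fact it is Case 1 (where one needs $e(G_1)+\binom{k+1}{2}+4\le e(G_1\approx G_2)+\binom{k+1}{2}$, i.e.\ $e(G_2)\ge2$) that uses it, while $k=2$ is settled by the cited result independently.
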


\begin{proof}
The cases of $k=1$ is trivial, and the case of $k=2$ have been confirmed in \cite{2013}. We assume that $k\geq3$ in the following, and $k_i$ of the $k$ largest signless Laplacian eigenvalues of $G_1\cup G_2$ come from $\sigma(Q(G_i)),$ where $i=1,2$ and $k_1+k_2=k.$ Since $e(G_i)\geq2$, we have $e(G_1\approx G_2)=e(G_1)+e(G_2)+2\geq e(G_i)+4$ for $i=1,2.$

{\bf Case 1: } $k_1k_2=0.$

Without loss of generality, suppose $k_2=0,$ then $k_1=k.$ Hence, by Lemma \ref{lem25},

\hskip.5cm $S^+_k(G_1\approx G_2)\leq S_k^+(G_1\cup G_2)+2S^+_k(K_2)$
\vskip0.1cm
\hskip3cm $=S^+_k(G_1)+2S^+_k(K_2)$
\vskip0.1cm
\hskip3cm $\leq e(G_1)+\binom{k_1+1}{2}+4$
\vskip0.1cm
\hskip3cm $\leq e(G_1\approx G_2)+\binom{k+1}{2}.$

{\bf Case 2: } $k_1k_2\neq0.$

Since $k_1k_2\neq0$ and $k\geq3,$ $k_1k_2\geq2.$ Then by Lemma \ref{lem25},

\hskip.5cm $S^+_k(G_1\approx G_2)\leq S^+_k(G_1\cup G_2)+2S^+_k(K_2)$

\hskip3cm $=\sum\limits_{i=1}^2S^+_{k_i}(G_i)+2S^+_k(K_2)$

\hskip3cm $\leq\sum\limits_{i=1}^2[e(G_i)+\binom{k_i+1}{2}]+4$

\hskip3cm $=e(G_1\approx G_2)+\frac{k_1^2+k_2^2+k_1+k_2+4}{2}$
\vskip0.1cm
\hskip3cm $\leq e(G_1\approx G_2)+\frac{(k_1+k_2)^2+k_1+k_2}{2}$
\vskip0.1cm
\hskip3cm $= e(G_1\approx G_2)+\binom{k+1}{2}.$
\end{proof}

Let $\psi(G,x)$ be the signless Laplacian characteristic polynomial of $G,$ equal to $det(xI-Q(G)).$ Let $Q_v(G)$ be the principal submatrix of $Q(G)$ obtained by deleting the row and column corresponding to the vertex $v.$

\begin{lem}\label{lem26}{\rm (\cite{2012e}, \cite{2012f})}
Let $G$ be a connected graph with $n$ vertices which consists of a subgraph $H$(with at least two vertices) and $n-|H|$ distinct pendant vertices (not in $H$) attaching to a vertex $v$ in $H.$ Then
\vskip0.2cm
\hskip.5cm $\psi(G,x)=(x-1)^{n-|H|}\psi(H,x)-(n-|H|)x(x-1)^{n-|H|-1}\psi(Q_v(H)).$
\end{lem}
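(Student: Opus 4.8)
The plan is to compute $\psi(G,x)=\det(xI-Q(G))$ directly, exploiting the block structure coming from the partition of $V(G)$ into the $p:=n-|H|$ pendant vertices and the $|H|$ vertices of $H$. Order the vertices of $G$ so that the pendant vertices come first. Each pendant vertex has degree $1$ in $G$ and its unique neighbour is $v$, while the degree of $v$ in $G$ is $p$ larger than its degree in $H$; hence, writing $E_{vv}$ for the $|H|\times|H|$ matrix with a single $1$ in the diagonal entry indexed by $v$, and $B$ for the $p\times|H|$ matrix whose only nonzero column is the all-ones column indexed by $v$, one has
\[
Q(G)=\begin{pmatrix} I_p & B\\ B^{T} & Q(H)+pE_{vv}\end{pmatrix}.
\]

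First I would assume $x\neq 1$, so that the block $(x-1)I_p$ of $xI-Q(G)$ is invertible, and take its Schur complement:
\[
\psi(G,x)=(x-1)^{p}\,\det\!\Big(xI_{|H|}-Q(H)-pE_{vv}-\tfrac{1}{x-1}B^{T}B\Big).
\]
Since $B$ has exactly one nonzero column, namely the all-ones column at position $v$, we get $B^{T}B=pE_{vv}$, and the argument of the determinant collapses to $xI_{|H|}-Q(H)-\tfrac{px}{x-1}E_{vv}$, which is a rank-one perturbation of $xI_{|H|}-Q(H)$.

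Next I would apply the matrix determinant lemma — equivalently, cofactor expansion along the row and column indexed by $v$ — in the form $\det(A-c\,e_ve_v^{T})=\det A-c\cdot M_{vv}(A)$, where $M_{vv}(A)$ is the $(v,v)$ minor. Taking $A=xI_{|H|}-Q(H)$ gives $\det A=\psi(H,x)$ and $M_{vv}(A)=\det(xI-Q_v(H))=\psi(Q_v(H))$ (reading the paper's notation $\psi(Q_v(H))$ as $\det(xI-Q_v(H))$), so
\[
\psi(G,x)=(x-1)^{p}\psi(H,x)-px(x-1)^{p-1}\psi(Q_v(H)),
\]
which is the claimed identity after substituting $p=n-|H|$. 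Finally, both sides are polynomials in $x$ agreeing on all $x\neq1$, hence they agree identically, and the restriction $x\neq1$ is removed.

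The computation is short; the only points that require care are (i) recording that the $(v,v)$ diagonal entry of $Q(G)$ equals $d_H(v)+p$ rather than $d_H(v)$, which is the source of the $E_{vv}$ correction term and, after simplification of $p\big(1+\tfrac{1}{x-1}\big)$, of the $x$ in the numerator of $\tfrac{px}{x-1}$; and (ii) the fact that $B^{T}B=pE_{vv}$ has no off-diagonal contribution, which is precisely where the hypothesis that all pendant vertices attach to the \emph{same} vertex $v$ enters. I do not expect a genuine obstacle here; an alternative proof via the standard recursions for $\psi$ under vertex coalescence would also work, but the Schur-complement computation above is the most transparent.
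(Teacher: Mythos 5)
Your proof is correct. Note first that the paper itself gives no proof of this lemma: it is quoted from the references (Hou--Li and Li--Zhang), so there is no internal argument to compare against; your Schur-complement computation is a legitimate self-contained derivation. All the key steps check out: the block form $Q(G)=\begin{pmatrix} I_p & B\\ B^{T} & Q(H)+pE_{vv}\end{pmatrix}$ correctly records that each pendant vertex has degree $1$ and that $d_G(v)=d_H(v)+p$; the identity $B^{T}B=pE_{vv}$ is exactly where the hypothesis that all pendants attach to the same vertex $v$ is used, as you observe; the simplification $p\bigl(1+\tfrac{1}{x-1}\bigr)=\tfrac{px}{x-1}$ produces the factor $x$ in the second term; and the rank-one determinant identity $\det(A-c\,e_ve_v^{T})=\det A-c\,M_{vv}(A)$ (which follows from multilinearity in the $v$-th column, with cofactor sign $(-1)^{2v}=+1$) turns the minor into $\psi(Q_v(H))=\det(xI-Q_v(H))$, matching the paper's notation. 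The final passage from $x\neq 1$ to all $x$ by polynomial identity is also handled properly. The cited sources typically obtain the same formula by a Laplace expansion or a deletion recursion over the pendant vertices one at a time; your version processes all $p$ pendants simultaneously and is arguably cleaner, at the cost of the (harmless) invertibility caveat on $(x-1)I_p$.
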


Let $\infty(p,q,1)$ denote the bicyclic graph obtained from two cycles $C_{p},$ $C_{q}$ by identifying a vertex of $C_{p}$ with a vertex of $C_{q},$ where $q\geq p\geq 3.$ Let $\infty(p,q,t)$ denote the bicyclic graph obtained from a path $P_t$ and two cycles $C_{p},$ $C_{q}$ by identifying a vertex of $C_{p}$ with one end of $P_t$ and a vertex of $C_{q}$ with the other end vertex of $P_t,$ where $q\geq p\geq 3$ and $t\geq2.$ (see Fig.1)

\vspace{-0.25cm}
\begin{picture}(500,100)
\put(50,50){\circle{50}}
\put(90,50){\circle{50}}
\put(70,50){\circle*{5}}

\put(45,45){$C_p$}
\put(85,45){$C_q$}
\put(42,12){$\infty(p,q,1)$}

\put(240,50){\circle{50}}
\put(377,50){\circle{50}}

\put(260,50){\circle*{5}}
\put(260,50){\line(1,0){30}}
\put(290,50){\circle*{5}}
\put(298,47){$\cdots$}
\put(326,50){\circle*{5}}
\put(326,50){\line(1,0){30}}
\put(356,50){\circle*{5}}

\put(261,55){$\overbrace{\quad\quad\quad\quad\quad\quad\quad\quad}$}
\put(301,70){\small $P_t$}
\put(235,45){\small $C_p$}
\put(370,45){\small $C_q$}
\put(270,12){\small $\infty(p,q,t)(t\geq2)$}
\put(80,-13){\small Fig.1\quad The graphs $\infty(p,q,1)$ and $\infty(p,q,t)(t\geq2)$}
\end{picture}

\vskip0.8cm
A bicyclic graph is called $\infty$-type if it is $\infty(p,q,t)$ or it can be obtained by attaching some hanging trees to $\infty(p,q,t),$ where $q\geq p\geq 3$ and $t\geq1.$ We will show that Conjecture \ref{con21} is true for $\infty$-type bicyclic graphs when $k=3$ and $n\geq  11.$

\begin{thm}\label{thm46}
Let $n,k$ be integers with $n\geq11$ and $1\leq k\leq n,$ and $G$ be an $\infty$-type bicyclic graph with $n$ vertices. Then
\vskip.2cm
\hskip.5cm $S^+_k(G)\leq e(G)+\binom{k+1}{2}.$
\end{thm}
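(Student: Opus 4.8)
The plan is first to remove every $k \neq 3$ by appealing to Theorem~\ref{thm42}, so that only $k=3$ and $n\geq 11$ remain; then $e(G)=n+1$ and the goal is $S^+_3(G)\leq n+7$. I would then induct on $n$. If $G$ has a hanging tree $T$ with $|V(T)|\geq 2$, then $G-T$ is again an $\infty$-type bicyclic graph with $5\leq|V(G-T)|=:s\leq n-2$, and Conjecture~\ref{con21} holds for $G-T$ at every index $l\in\{1,\dots,s\}$: for $l\neq 3$ by Theorem~\ref{thm42}, and for $l=3$ either because $s\leq 10$ (where Conjecture~\ref{con21} is already known) or because $s\geq 11$, so that the inductive hypothesis applies. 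Corollary~\ref{cor44} then transfers the bound to $G$. Hence it remains to treat the case when $G$ has no hanging tree of order $\geq 2$, which forces $G=\infty(p,q,t)$ together with a (possibly empty) set of pendant vertices attached directly to the vertices of $\infty(p,q,t)$.

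I would then split on $t$ and $p$. If $t\geq 2$, deleting an edge of the connecting path $P_t$ separates $G$ into two unicyclic graphs $U_1$ (carrying $C_p$) and $U_2$ (carrying $C_q$), each with its own pendants, so $G=U_1\sim U_2$; since Theorem~\ref{thm41} gives Conjecture~\ref{con21} for unicyclic graphs at all indices, Lemma~\ref{lem43} finishes this case. Now let $t=1$, with $C_p$ and $C_q$ sharing the vertex $v_0$. If $p\geq 4$, let $P'$ be the tree consisting of the path $C_p-v_0$ together with the pendants hanging on it, and let $C'=G-V(P')$, a unicyclic graph containing $C_q$; then $G=P'\approx C'$ across the two edges joining $v_0$ to its $C_p$-neighbours, with $e(P')\geq p-2\geq 2$ and $e(C')\geq q\geq 3$, so Lemma~\ref{lem44} applies, using Inequality~(1) for $P'$ and Theorem~\ref{thm41} for $C'$.

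The remaining case is $t=1$, $p=3$, with triangle $v_0ab$. If some pendant is attached to $a$ or $b$, then the subtree $T_0$ of $G$ consisting of $a$, $b$ and their pendant neighbours has $e(T_0)\geq 2$, the graph $C'=G-V(T_0)$ is unicyclic with $e(C')\geq q\geq 3$, and $G=T_0\approx C'$ across $v_0a,v_0b$, so Lemma~\ref{lem44} applies. If no pendant sits on $a$ or $b$ but either $q\geq 4$ or some pendant sits on a vertex of $C_q-v_0$, I would take $G_1$ to be the triangle with all pendants on $v_0$ (a unicyclic graph, $e(G_1)\geq 3$) and $G_2=G-V(G_1)$, which is the path $C_q-v_0$ with its pendants (a tree with $e(G_2)=(q-2)+(\text{number of pendants on }C_q-v_0)\geq 2$); then $G=G_1\approx G_2$ across the two edges from $v_0$ to $C_q$, and Lemma~\ref{lem44} again finishes. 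The only configuration not covered above is $G=\infty(3,3,1)$ with all $n-5$ extra vertices pendant at $v_0$, for which $\Delta(G)=4+(n-5)=n-1$; then Theorem~\ref{thm32} with $k=3$ gives $S^+_3(G)\leq 2e(G)-\Delta+3=2(n+1)-(n-1)+3=n+6<n+7$.

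I expect the delicate part to be this last layer of case analysis around $\infty(3,q,1)$ with pendants, because for $k=3$ the bound $e(G)+\binom{4}{2}$ leaves almost no slack: the straightforward edge decompositions via Lemma~\ref{lem25} (peeling off a $K_2$, a $K_3$, a $P_3$, or a star) all overshoot by at least one, so in every subcase one must pick exactly the right $\approx$-splitting while keeping the hypothesis $e(G_i)\geq 2$ of Lemma~\ref{lem44} satisfied. The single family for which no admissible splitting exists, namely $\infty(3,3,1)$ with all extra vertices pendant at the centre, is rescued only because its maximum degree equals $n-1$, so Theorem~\ref{thm32} applies. Verifying that this is truly the sole exceptional family, and that the hanging-tree reduction genuinely reduces $G$ to $\infty(p,q,t)$ plus pendants on the core, is the part that requires careful bookkeeping.
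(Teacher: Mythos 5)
Your proposal is correct, and its skeleton coincides with the paper's: reduce to $k=3$ via Theorem~\ref{thm42}, strip hanging trees via Corollary~\ref{cor44}, split the case $t\geq 2$ by deleting an edge of $P_t$ and invoking Theorem~\ref{thm41} with Lemma~\ref{lem43}, and split the case $t=1$ into two pieces joined by the two edges at the common vertex so that Lemma~\ref{lem44} applies. Two things you do differently are worth noting. First, the paper disposes of the case $t=1$, $G\not\cong\infty'(3,3,1)$ with the single assertion that two suitable cycle edges $e_1,e_2$ exist making $G-\{e_1,e_2\}$ a unicyclic graph plus a tree with at least two edges; your explicit case analysis (pendants on $a$ or $b$; $q\geq4$ or pendants on $C_q-v_0$; the leftover family) is exactly the verification of that assertion, and your observation that the hanging-tree reduction is really an induction on $n$ (grounded in the known truth of the conjecture for at most ten vertices) makes rigorous a step the paper leaves implicit. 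Second, and more substantively, for the exceptional graph $\infty'(3,3,1)$ the paper computes the signless Laplacian characteristic polynomial via Lemma~\ref{lem26} and reads off $S_3^+<(n+3)+3+1$, whereas you simply note $\Delta=n-1$ and apply Theorem~\ref{thm32} to get $S_3^+\leq 2(n+1)-(n-1)+3=n+6<n+7$. Your route avoids the polynomial computation entirely and is the cleaner of the two; it buys nothing extra in generality, but it makes transparent why this single family, which admits no admissible $\approx$-splitting, is nevertheless harmless.
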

\begin{proof}
From Theorem \ref{thm42}, we only need to prove the case of $k=3.$

Let $G$ be an $\infty$-type bicyclic graph with $n$ vertices, then $G$ is obtianed by attaching some hanging trees to $\infty(p,q,t,)$ where $q\geq p\geq3$ and $t\geq1.$ By Corollary \ref{cor44}, it will suffice to consider the $\infty$-type bicyclic graph $G$ which is obtained by attaching some pendent vertices to $\infty(p,q,t)$ or $G\cong\infty(p,q,t).$

{\bf Case 1:} $t\geq2.$

Let $e$ be an edge of $P_t$ in  $\infty(p,q,t),$ then $G-e$ is the union of two unicyclic graphs. By Theorem \ref{thm41}
and Lemma \ref{lem43}, we get the desired result.

{\bf Case 2:} $t=1.$

Let $\infty'(3,3,1)$ be the graph obtained from $\infty(3,3,1)$ by attaching $n-5$ pendent vertices to the common vertex of two cycles.
If $G\cong \infty'(3,3,1),$ then by Lemma \ref{lem26} and direct calculation, $\psi(\infty'(3,3,1),x)=(x-1)^{n-4}(x-3)[x^3-(n+3)x^2+3nx-8].$
\noindent Thus $S_3^+(\infty'(3,3,1))<(n+3)+3+1=(n+1)+\binom{3+1}{2}=e(\infty'(3,3,1))+\binom{3+1}{2}.$

Otherwise, there exits two edges $e_1$ and $e_2$ of a cycle such that $G-\{e_1,e_2\}$ is the union of a unicyclic graph and a tree with at least two edges. Combining the Inequality (1), Theorem \ref{thm41} and Lemma \ref{lem44}, the result is obtained.
\end{proof}

Let $p,q,t$ be integers with $p, q\geq3,$ and $2\leq t\leq\min\{\frac{p+2}{2},\frac{q+2}{2}\},$ $\theta(p,q,t)$ denote the bicyclic graph in which two cycles $C_{p}$ and $C_{q}$ share path $P_t.$ Clearly, the cycle $C_{p+q-2t+2}$ has the maximum length among three cycles $C_p,$ $C_q,$ $C_{p+q-2t+2}.$ A bicyclic graph is called $\theta$-type if it is $\theta(p,q,t)$ or it can be obtained by attaching some hanging trees to $\theta(p,q,t).$ We will show that Conjecture \ref{con21} is true for $\theta$-type bicyclic graphs when $k=3$ and $n\geq11.$  The following lemmas are essential in our proof.

Let $U_n^1(a,b)$ be the graph obtained by attaching $a$ and $b$ pendent vertices to two vertices of a triangle, respectively, where $a+b=n-3$, $n\geq4$ and $a\geq b\geq0.$ Let $U_n^2(a,b)$ be the graph obtained by attaching $a$ and $b$ pendent vertices to two nonadjacent vertices of a quadrangle, respectively, where $a+b=n-4$, $n\geq5$ and $a\geq b\geq0.$ (see Fig.2)

\vspace{0.25cm}
\begin{picture}(500,100)
\put(155,30){\circle*{5}}
\put(125,45){\circle*{5}}
\put(125,15){\circle*{5}}

\put(125,45){\line(2,-1){30}}
\put(125,15){\line(2,1){30}}
\put(125,45){\line(0,-1){30}}

\put(125,45){\line(-1,2){15}}
\put(125,45){\line(0,1){30}}
\put(125,45){\line(1,1){30}}

\put(110,75){\circle*{5}}
\put(125,75){\circle*{5}}
\put(155,75){\circle*{5}}
\put(131,72){$\cdots$}

\put(125,15){\line(-1,-2){15}}
\put(125,15){\line(0,-1){30}}
\put(125,15){\line(1,-1){30}}

\put(110,-15){\circle*{5}}
\put(125,-15){\circle*{5}}
\put(155,-15){\circle*{5}}
\put(131,-18){$\cdots$}

\put(128,86){$a$}
\put(128,-34){$b$}
\put(105,77){ $\overbrace{\quad \quad \quad \quad}$}
\put(108,-17){$\underbrace{\quad \quad \quad \quad}$}
\put(113,-48){\small $U^1_n(a,b)$}

\put(260,30){\circle*{5}}
\put(275,45){\circle*{5}}
\put(275,15){\circle*{5}}
\put(305,30){\circle*{5}}

\put(260,30){\line(1,1){15}}
\put(260,30){\line(1,-1){15}}

\put(275,45){\line(2,-1){30}}
\put(275,15){\line(2,1){30}}

\put(275,45){\line(-1,2){15}}
\put(275,45){\line(0,1){30}}
\put(275,45){\line(1,1){30}}

\put(260,75){\circle*{5}}
\put(275,75){\circle*{5}}
\put(305,75){\circle*{5}}
\put(281,72){$\cdots$}

\put(275,15){\line(-1,-2){15}}
\put(275,15){\line(0,-1){30}}
\put(275,15){\line(1,-1){30}}

\put(260,-15){\circle*{5}}
\put(275,-15){\circle*{5}}
\put(305,-15){\circle*{5}}
\put(281,-18){$\cdots$}

\put(278,86){$a$}
\put(278,-34){$b$}
\put(255,77){ $\overbrace{\quad \quad \quad \quad}$}
\put(258,-17){$\underbrace{\quad \quad \quad \quad}$}
\put(263,-48){\small $U^2_n(a,b)$}
\put(105,-68){\small Fig.2\quad The graphs $U^1_n(a,b)$ and $U^2_n(a,b).$}
\end{picture}

\vspace{2.5cm}
\begin{lem}\label{lem47}
For $n\geq5,$ $a\geq b\geq0$ and $a\geq2,$ $q_3(U^1_n(a,b))<2.$
\end{lem}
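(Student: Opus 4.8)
The plan is to reduce the problem to the second-largest eigenvalue of a principal submatrix of $Q\big(U_n^1(a,b)\big)$, and then to a single $2\times2$ block. Label the triangle so that $u$ is the vertex carrying the $a$ ($\geq2$) pendant vertices, $v$ the vertex carrying the $b$ pendant vertices, $w$ the third triangle vertex, and $r_1,\dots,r_b$ the pendant neighbours of $v$. By the Cauchy interlacing theorem, for any vertex $x$ of a graph $G$ one has $\lambda_i\big(Q_x(G)\big)\geq q_{i+1}(G)$; applied with $i=2$ and $x=u$ this gives $q_3\big(U_n^1(a,b)\big)\leq\lambda_2\big(Q_u(U_n^1(a,b))\big)$, so it suffices to show $\lambda_2\big(Q_u(U_n^1(a,b))\big)<2$.

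First I would write down $Q_u\big(U_n^1(a,b)\big)$. Deleting the row and column of $u$ leaves the $a$ pendant neighbours of $u$ with no neighbours, so each of them contributes a $1\times1$ block $[1]$ (the degree of a pendant vertex in $G$), and $Q_u\big(U_n^1(a,b)\big)=B\oplus I_a$, where $B$ is the principal submatrix on $\{v,w,r_1,\dots,r_b\}$. The diagonal of $B$ holds the degrees \emph{in $G$}: $B_{vv}=b+2$, $B_{ww}=2$, $B_{r_jr_j}=1$, while the only nonzero off-diagonal entries are $B_{vw}=1$ and $B_{vr_j}=1$ ($1\leq j\leq b$). Since every eigenvalue of $I_a$ equals $1<2$, it is enough to prove that $B$ has at most one eigenvalue $\geq 2$, equivalently that $B-2I$ has exactly one nonnegative eigenvalue.

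Next I would determine the inertia of $B-2I$ by symmetric Gaussian elimination. The matrix $B-2I$ has diagonal $(b,0,-1,\dots,-1)$ and the off-diagonal pattern of $B$. Each row $r_j$ has diagonal entry $-1$ and its only nonzero off-diagonal entry in the $v$-column; pivoting on these $b$ entries in turn clears the $(v,r_j)$ entries and replaces the $(v,v)$ entry $b$ by $b+b=2b$, so $B-2I$ is congruent to $\operatorname{diag}(-1,\dots,-1)$ (the $b$ former pendant rows) together with the $2\times2$ block $\left(\begin{smallmatrix}2b&1\\1&0\end{smallmatrix}\right)$ on $\{v,w\}$. This $2\times2$ block has determinant $-1<0$, hence exactly one positive and one negative eigenvalue and no zero eigenvalue; therefore $B-2I$ has exactly one positive eigenvalue, $b+1$ negative eigenvalues, and no zero eigenvalue. (If $b=0$ there are no rows $r_j$ and $B-2I=\left(\begin{smallmatrix}0&1\\1&0\end{smallmatrix}\right)$, with the same conclusion.) Consequently $Q_u\big(U_n^1(a,b)\big)-2I=(B-2I)\oplus(-I_a)$ still has exactly one positive eigenvalue and no zero eigenvalue, so $\lambda_2\big(Q_u(U_n^1(a,b))\big)<2$, which yields $q_3\big(U_n^1(a,b)\big)<2$.

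I do not expect a serious obstacle; once $B$ is correctly identified the rest is elementary. The two points that do require attention are that the diagonal of the principal submatrix $Q_u$ records the $G$-degrees rather than the $(G-u)$-degrees, and that one should keep track of the \emph{zero} part of the inertia of $B-2I$ (not just its signature) so that the conclusion is the strict inequality $q_3<2$ rather than merely $q_3\leq 2$. If one would rather avoid the congruence, the same bound follows by noting that for $b\geq1$ the eigenvalues of $B$ apart from the $(b-1)$-fold eigenvalue $1$ are the three roots of $g(\lambda)=(b+2-\lambda)(\lambda-2)(\lambda-1)+(\lambda-1)+b(\lambda-2)$, and that $g(1)=-b<0$, $g(2)=1>0$ together with the negative leading coefficient of $g$ force $g$ to have exactly one root above $2$ (with its middle root in $(1,2)$); the case $b=0$ is immediate since then $B=\left(\begin{smallmatrix}2&1\\1&2\end{smallmatrix}\right)$ has eigenvalues $3$ and $1$.
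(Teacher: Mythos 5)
Your proof is correct, and it takes a genuinely different route from the paper. The paper invokes its Lemma \ref{lem26} to compute the full signless Laplacian characteristic polynomial $\psi(U^1_n(a,b),x)=(x-1)^{a+b+2}f(x)$ with $f$ of degree $5$, and then localizes all five roots of $f$ by sign evaluations at $0$, $\frac{2}{n}$, $1$, $2$ and $a+2$; the verification that $f(\frac{2}{n})>0$ requires a two-case computation and is where the hypotheses $n\geq5$ and $a\geq2$ enter. Your argument instead combines Cauchy interlacing ($q_3(G)\leq\lambda_2(Q_u(G))$ for the one-vertex-deleted principal submatrix) with a Sylvester inertia count of $B-2I$ after symmetric elimination of the pendant rows; both steps check out, including the correct observation that the diagonal of $Q_u$ carries the $G$-degrees and that the final $2\times2$ block $\bigl(\begin{smallmatrix}2b&1\\1&0\end{smallmatrix}\bigr)$ has negative determinant, so the inertia of $B-2I$ is exactly $(1,0,b+1)$ and the inequality is strict. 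What your approach buys is a shorter, computation-free proof that in fact needs none of the hypotheses $n\geq5$, $a\geq2$ (it works for all $a\geq b\geq0$); what it gives up is the finer information the paper extracts, namely the full distribution $0<x_5<\frac{2}{n}<x_4\leq1\leq x_3<2<x_2<a+2<x_1$ of the nontrivial eigenvalues, which however is not used elsewhere --- only $q_3<2$ is needed in Lemma \ref{lem410}. Your backup cubic argument via the equitable quotient is also consistent with the inertia computation.
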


\begin{proof}
By Lemma \ref{lem26} and direct calculation, we have

\hskip0.5cm $\psi(U^1_n(a,b),x)=(x-1)^{a+b+2}f(x),$

\noindent where $f(x)=x^5-(n+5)x^4+[(a+5)n-a^2-3a+7]x^3-[(2a+7)n-2a^2-6a+7]x^2+(3n+8)x-4.$

 Let $x_1\geq x_2\geq x_3\geq x_4\geq x_5$ be the roots of $f(x)=0.$ Note that

\hskip.5cm $f(0)=-4<0,$ $f(1)=-ab\leq0,$ $f(2)=2a+2b-2>0,$

\hskip.5cm $f(a+2)=-a^4+a^3b+3a^2b-3a^3-5a^2+4ab-5a+3b-2$

\hskip2.1cm $\leq -a^4+a^4+3a^3-3a^3-5a^2+4a^2-5a+3a-2$

\hskip2.1cm $=-a^2-2a-2<0,$

and

\hskip.5cm$f(\frac{2}{n})=\frac{2}{n^5}[n^5-(4a+6)n^4+(4a^2+16a+6)n^3-(4a^2+12a-20)n^2-40n+16]$
\vskip0.1cm
\hskip1.45cm $=\frac{2}{n^5}\{n^3[n-(2a+3)]^2+(4a-3)n^3-(4a^2+12a-20)n^2-40n+16\}$
\vskip0.1cm
\hskip1.45cm $=\frac{2}{n^5}\{n^2[(n-2a-3)^2n+(4a-3)n-4a^2-12a+9]+11n^2-40n+16\}$
\vskip0.1cm
\hskip1.45cm $=\frac{2}{n^5}\{n^2[(b-a)^2(a+b+3)+4ab-3a-3b]+11n^2-40n+16\}$
\vskip0.1cm
\hskip1.45cm $=\frac{2}{n^5}\{n^2[(a-b)^2(a+b)+3a^2+3b^2-2ab-3a-3b]+11n^2-40n+16\}.$
\vskip0.1cm
{\bf Case 1:} $a=2.$
\vskip0.1cm
\hskip.5cm $f(\frac{2}{n})=\frac{2}{n^5}\{n^2[(2-b)^2(2+b)+3b^2-7b+6]+11n^2-40n+16\}$
\vskip0.1cm
\hskip1.45cm $=\frac{2}{n^5}\{n^2[(2-b)^2(2+b)+3(b-\frac{7}{6})^2+\frac{23}{12}]+11(n-\frac{20}{11})^2-\frac{224}{11}\}$
\vskip0.1cm
\hskip1.45cm $\geq\frac{2}{n^5}\{n^2[(2-b)^2(2+b)+3(b-\frac{7}{6})^2+\frac{23}{12}]+11(5-\frac{20}{11})^2-\frac{224}{11}\}$
\vskip0.1cm
\hskip1.45cm $=\frac{2}{n^5}\{n^2[(2-b)^2(2+b)+3(b-\frac{7}{6})^2+\frac{23}{12}]+91\}$
\vskip0.1cm
\hskip1.45cm $>0.$
\vskip0.1cm
{\bf Case 2:} $a\geq3.$
\vskip0.1cm
\hskip.5cm $f(\frac{2}{n})=\frac{2}{n^5}\{n^2[(a-b)^2(a+b)+2a(a-b)+a(a-3)+3b(b-1)]+11(n-\frac{20}{11})^2-\frac{224}{11}\}$
\vskip0.1cm
\hskip1.45cm $\geq\frac{2}{n^5}\{n^2[(a-b)^2(a+b)+2a(a-b)+a(a-3)+3b(b-1)]+11(5-\frac{20}{11})^2-\frac{224}{11}\}$
\vskip0.1cm
\hskip1.45cm $=\frac{2}{n^5}\{n^2[(a-b)^2(a+b)+2a(a-b)+a(a-3)+3b(b-1)]+91\}$
\vskip0.1cm
\hskip1.45cm $>0.$

Combining the above arguments, we have $0<x_5<\frac{2}{n}< x_4\leq 1\leq x_3<2<x_2<a+2<x_1.$ Then $q_3(U_n^1(a,b))=x_3<2.$
\end{proof}

\begin{lem}\label{lem48}{\rm (\cite{2012b})}
For $n\geq9,$ $a\geq b\geq0,$ $\mu_3(U_n^2(a,b))=2.$
\end{lem}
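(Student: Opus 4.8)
The plan is to prove the two inequalities $\mu_3(U_n^2(a,b))\ge 2$ and $\mu_3(U_n^2(a,b))\le 2$ separately. Write $G=U_n^2(a,b)$; let $v_1v_2v_3v_4v_1$ be the quadrangle, let $q_1,\dots,q_a$ be the pendant vertices attached to $v_1$ and $p_1,\dots,p_b$ those attached to $v_3$, so that $v_1,v_3$ are the two nonadjacent vertices of the quadrangle, $d_G(v_1)=a+2$, $d_G(v_3)=b+2$, and $n=a+b+4$. From $n\ge 9$ and $a\ge b$ we get $2a\ge a+b=n-4\ge 5$, so $a\ge 3$. Since $G$ is bipartite, Lemma \ref{lem23} lets me work with $L(G)$ throughout.

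For the lower bound, note that $v_2$ and $v_4$ are nonadjacent with the common neighbourhood $N_G(v_2)=N_G(v_4)=\{v_1,v_3\}$; hence the vector $x$ with $x_{v_2}=1$, $x_{v_4}=-1$ and $x_u=0$ for all other $u$ satisfies $L(G)x=d_G(v_2)x=2x$, so $2\in\sigma(L(G))$. Moreover $q_1v_1v_2v_3v_4$ is a copy of $P_5$ in $G$, so $H:=P_5\cup(n-5)K_1$ is a spanning subgraph of $G$; since $L(G)-L(H)$ is positive semidefinite, $\mu_2(G)\ge\mu_2(H)=\mu_2(P_5)=2-2\cos\frac{3\pi}{5}=\frac{3+\sqrt{5}}{2}>2$. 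Consequently $\mu_1(G)\ge\mu_2(G)>2$, so the eigenvalue $2$ is neither $\mu_1(G)$ nor $\mu_2(G)$, which forces $\mu_3(G)\ge 2$.

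For the upper bound I use Cauchy interlacing. Let $M$ be the principal submatrix of $L(G)$ obtained by deleting the row and column indexed by $v_1$; interlacing gives $\mu_3(G)=\lambda_3(L(G))\le\lambda_2(M)$. Now $M=L(G-v_1)+D$, where $G-v_1\cong K_{1,b+2}\cup aK_1$ (the star with centre $v_3$ and leaves $v_2,v_4,p_1,\dots,p_b$, together with the isolated vertices $q_1,\dots,q_a$) and $D$ is the diagonal $0/1$ matrix with a $1$ exactly in the positions of the $G$-neighbours $v_2,v_4,q_1,\dots,q_a$ of $v_1$, that is, of the vertices whose degree drops when $v_1$ is removed. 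Since $\mu_2(K_{1,b+2}\cup aK_1)=1$ and $\lambda_1(D)=1$, Weyl's inequality yields $\lambda_2(M)\le\mu_2(G-v_1)+\lambda_1(D)=2$, hence $\mu_3(G)\le 2$. Combining the two bounds gives $\mu_3(G)=2$.

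The content is entirely in the upper bound, where the key choice is which vertex to delete: because $a\ge b$ the vertex $v_1$ has maximum degree in $G$, and deleting it leaves a graph whose Laplacian spectrum collapses to $\{b+3,\,1^{[b+1]},\,0^{[a+1]}\}$ --- so $\mu_2(G-v_1)=1$ --- which is exactly what makes the Weyl estimate tight at $2$; deleting instead a quadrangle vertex $v_2$ or $v_3$ leaves $\mu_2$ too large. A more computational alternative, in the style of the rest of the paper, is to expand $\psi(G,x)=x(x-1)^{a+b-2}(x-2)g(x)$ by applying Lemma \ref{lem26} twice (at $v_3$, then at $v_1$), where $g$ is monic of degree $4$ with $g(0)=2n>0$, $g(1)=-ab\le 0$ and $g(2)=2(n-4)>0$, and then to check that $g$ has exactly two roots exceeding $2$; but pinning down a root of $g$ in $(2,\infty)$ requires a short case analysis in the sizes of $a$ and $b$, which the interlacing route avoids.
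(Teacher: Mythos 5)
Your argument is correct, and it is worth noting that the paper itself offers no proof of this statement: Lemma \ref{lem48} is simply imported from \cite{2012b} (Du and Zhou), where it is established by characteristic-polynomial computations in the spirit of the paper's own Lemma \ref{lem47}. Your proof is therefore a genuinely different, and self-contained, route. Both halves check out: for the lower bound, $e_{v_2}-e_{v_4}$ is indeed a Laplacian eigenvector for the eigenvalue $d(v_2)=2$ because $v_2,v_4$ are nonadjacent twins, and the spanning-subgraph monotonicity $\mu_2(G)\ge\mu_2(P_5)=\frac{3+\sqrt5}{2}>2$ (using $a\ge 3$, so the path $q_1v_1v_2v_3v_4$ exists) correctly pushes the eigenvalue $2$ down to position at least $3$. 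For the upper bound, the decomposition $M=L(G-v_1)+D$ of the principal submatrix, with $G-v_1\cong K_{1,b+2}\cup aK_1$ so that $\mu_2(G-v_1)=1$, combined with Weyl's inequality $\lambda_2(M)\le\mu_2(G-v_1)+\lambda_1(D)=2$ and Cauchy interlacing $\mu_3(G)\le\lambda_2(M)$, is clean and tight. What your approach buys is transparency and brevity: no polynomial expansion, no case analysis in $a$ and $b$, and the mechanism (why deleting the higher-degree branch vertex $v_1$ is the right move) is visible. What it costs relative to the computational route of \cite{2012b} is only that it gives no information about the other eigenvalues, which is irrelevant here since the paper only uses $\mu_3=2$ (via Lemma \ref{lem49} and Lemma \ref{lem410}).
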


By Lemma \ref{lem23} and $U^2_n(a,b)$ is a bipartite graph, we have

\begin{lem}\label{lem49}
For $n\geq9, a\geq b\geq0,$ $q_3(U_n^2(a,b))=2.$
\end{lem}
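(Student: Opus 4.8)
The plan is to deduce this immediately from the two preceding lemmas, since \lem{lem49} is really a corollary of \lem{lem48} via \lem{lem23}.

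First I would check that $U_n^2(a,b)$ is bipartite. By definition it is obtained from a quadrangle $C_4$ by attaching $a$ and $b$ pendent vertices at two nonadjacent vertices. The cycle $C_4$ is an even cycle, hence bipartite, with the two nonadjacent vertices of the quadrangle lying in the same part. Attaching pendent vertices never creates an odd cycle (a pendent vertex has degree one and lies on no cycle), so $U_n^2(a,b)$ remains bipartite; explicitly, one can two-color it by placing each pendent vertex in the opposite class from its neighbor.

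Next I would invoke Lemma~\ref{lem23}: since $U_n^2(a,b)$ is bipartite, $Q(U_n^2(a,b))$ and $L(U_n^2(a,b))$ have exactly the same eigenvalues, so in particular $q_3(U_n^2(a,b))=\mu_3(U_n^2(a,b))$. Then I would apply Lemma~\ref{lem48}, which gives $\mu_3(U_n^2(a,b))=2$ for all $n\geq 9$ and $a\geq b\geq 0$. Combining these two facts yields $q_3(U_n^2(a,b))=2$, as claimed, for the same range of parameters.

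There is essentially no obstacle here: the only non-bookkeeping point is the observation that $U_n^2(a,b)$ is bipartite, which holds by construction, and the hypotheses $n\geq 9$, $a\geq b\geq 0$ are inherited verbatim from Lemma~\ref{lem48}. (One could alternatively prove the statement from scratch, mimicking the argument of \lem{lem47} via Lemma~\ref{lem26} to compute $\psi(U_n^2(a,b),x)$ and locate its roots, but that would be strictly more work and is unnecessary given that Lemma~\ref{lem48} is already available.)
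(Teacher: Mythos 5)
Your proof is correct and is exactly the paper's argument: the paper states Lemma~\ref{lem49} as an immediate consequence of Lemma~\ref{lem23} (bipartite graphs have identical Laplacian and signless Laplacian spectra) together with Lemma~\ref{lem48}, with the bipartiteness of $U_n^2(a,b)$ noted in passing. Your additional verification that attaching pendent vertices to an even cycle preserves bipartiteness is a harmless elaboration of the same route.
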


\begin{lem}\label{lem410}
Let $G$ be a graph with $n$ vertices. If $G-\{e_1,e_2\}=H\cup K_2,$ where $e_1,e_2\in E(G),$ and $H=U^i_n(a,b)$ for some integers $a, b$ with $a+b=n-2-i$, $a\geq b\geq0,$ and $i=1,2,$ then $S_3^+(G)\leq e(G)+6.$
\end{lem}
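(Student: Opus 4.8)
The plan is to exhibit $G$ as the graph $H\cup K_2$ with the two edges $e_1,e_2$ inserted, and then play the edge-subadditivity of $S_3^+$ (Lemma~\ref{lem25}) off against two inputs that are already available: the estimate $q_3(H)\le 2$ and the fact that Conjecture~\ref{con11} is known for $k=2$ \cite{2013}.

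First I would fix the bookkeeping. Since $G-\{e_1,e_2\}=H\cup K_2$, the set $E(G)$ is the disjoint union of $E(H\cup K_2)$ with the two singletons $\{e_1\}$ and $\{e_2\}$, so $e(G)=e(H\cup K_2)+2=e(H)+3$. Regarding each of $e_1,e_2$ as a copy of $K_2$ and noting $\sigma(Q(K_2))=\{2,0\}$, hence $S_3^+(K_2)=2$, Lemma~\ref{lem25} gives
\[
S_3^+(G)\ \le\ S_3^+(H\cup K_2)+2\,S_3^+(K_2)\ =\ S_3^+(H\cup K_2)+4 .
\]
So it suffices to prove $S_3^+(H\cup K_2)\le e(H)+5$.

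The point of keeping the isolated edge bundled with $H$ is that the spectrum of $Q(H\cup K_2)$ is that of $Q(H)$ together with the two extra values $2$ and $0$, and the value $2$ can ``pay for'' $q_3(H)$. Concretely, I would first invoke $q_3(H)\le 2$ --- this is Lemma~\ref{lem49} when $i=2$ and Lemma~\ref{lem47} when $i=1$ (the latter valid once $a\ge 2$). Given $q_3(H)\le 2$, the three largest eigenvalues of $Q(H\cup K_2)$ are $q_1(H)$, $q_2(H)$ and the value $2$ (in some order), so
\[
S_3^+(H\cup K_2)\ \le\ q_1(H)+q_2(H)+2\ =\ S_2^+(H)+2 .
\]
Since Conjecture~\ref{con11} holds for $k=2$ \cite{2013}, $S_2^+(H)\le e(H)+\binom{3}{2}=e(H)+3$, and therefore $S_3^+(H\cup K_2)\le e(H)+5$. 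Feeding this into the first display yields $S_3^+(G)\le e(H)+9=e(G)+6$, as wanted.

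Two loose ends remain. Lemma~\ref{lem47} requires $a\ge 2$; but $a\le 1$ forces $a+b\le 2$, so in that case $H$, and hence $G$, has at most ten vertices, and Conjecture~\ref{con11} is already known in that range \cite{2013}; the low-order exceptions for the case $i=2$ (where Lemma~\ref{lem49} needs the order to be large enough) are handled in the same way. The only step that genuinely uses the structure of $H$ is the spectral input $q_3(H)\le 2$, which is precisely the content of Lemmas~\ref{lem47} and \ref{lem49}; that is the step I expect to be the real obstacle, and once it is in hand the lemma follows from the short chain of estimates above.
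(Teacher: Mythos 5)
Your argument is correct and matches the paper's proof essentially step for step: both decompose $G$ as $(H\cup K_2)$ plus the two edges $e_1,e_2$, apply Lemma \ref{lem25}, use Lemmas \ref{lem47} and \ref{lem49} to see that the three largest eigenvalues of $Q(H\cup K_2)$ sum to $S_2^+(H)+2$, and finish with the $k=2$ bound $S_2^+(H)\le e(H)+3$ (the paper routes this last step through Theorem \ref{thm41}, you cite \cite{2013} directly --- no real difference). Your explicit treatment of the small cases where the hypotheses of Lemmas \ref{lem47} and \ref{lem49} fail is a welcome addition that the paper silently omits, though those cases do not arise in its application since there $n\ge 11$.
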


\begin{proof}
By Lemma \ref{lem47} and Lemma \ref{lem49}, the first three largest signless Laplacian eigenvalues of $H\cup K_2$ are $q_1(H), q_2(H), 2,$ which implies $S^+_3(H\cup K_2)=S^+_2(H)+2.$ Hence, by Lemma \ref{lem25} and Theorem \ref{thm41},

$S^+_3(G)\leq S^+_3(H\cup K_2)+2S^+_3(K_2)=S^+_2(H)+6\leq e(H)+\binom{2+1}{2}+6=e(G)+6.$
\end{proof}

For $n\geq11,$ define sixteen classes of bicyclic graphs with $n$ vertices, denoted by $\mathbb{U}_n^1,$ $\mathbb{U}_n^2,$ $\ldots,$
$\mathbb{U}_n^{16},$ for which the structures of graphs in them are given in Fig.3. For $n\geq11,$ we also define three bicyclic graphs with $n$ vertices,
denoted by $U_n^{17},$ $U_n^{18},$ $U_n^{19},$ see also Fig.3. \\
\begin{picture}(500,100)
\put(20,30){\circle*{5}}
\put(35,45){\circle*{5}}
\put(35,15){\circle*{5}}
\put(65,30){\circle*{5}}

\put(20,30){\line(1,1){15}}
\put(20,30){\line(1,-1){15}}
\put(35,45){\line(0,-1){30}}


\put(35,45){\line(2,-1){30}}
\put(35,15){\line(2,1){30}}

\put(35,45){\line(-1,2){15}}
\put(35,45){\line(0,1){30}}
\put(35,45){\line(1,1){30}}

\put(20,75){\circle*{5}}
\put(35,75){\circle*{5}}
\put(65,75){\circle*{5}}
\put(41,72){$\cdots$}

\put(35,15){\line(-1,-2){15}}
\put(35,15){\line(0,-1){30}}
\put(35,15){\line(1,-1){30}}

\put(20,-15){\circle*{5}}
\put(35,-15){\circle*{5}}
\put(65,-15){\circle*{5}}
\put(41,-18){$\cdots$}
\put(35,-30){\small $\mathbb{U}^1_n$}

\put(110,30){\circle*{5}}
\put(125,45){\circle*{5}}
\put(125,15){\circle*{5}}
\put(155,30){\circle*{5}}
\put(125,30){\circle*{5}}

\put(110,30){\line(1,1){15}}
\put(110,30){\line(1,-1){15}}
\put(125,45){\line(0,-1){30}}


\put(125,45){\line(2,-1){30}}
\put(125,15){\line(2,1){30}}

\put(125,45){\line(-1,2){15}}
\put(125,45){\line(0,1){30}}
\put(125,45){\line(1,1){30}}

\put(110,75){\circle*{5}}
\put(125,75){\circle*{5}}
\put(155,75){\circle*{5}}
\put(131,72){$\cdots$}

\put(125,15){\line(-1,-2){15}}
\put(125,15){\line(0,-1){30}}
\put(125,15){\line(1,-1){30}}

\put(110,-15){\circle*{5}}
\put(125,-15){\circle*{5}}
\put(155,-15){\circle*{5}}
\put(131,-18){$\cdots$}
\put(125,-30){\small $\mathbb{U}^2_n$}

\put(200,30){\circle*{5}}
\put(215,45){\circle*{5}}
\put(215,15){\circle*{5}}
\put(245,45){\circle*{5}}
\put(245,15){\circle*{5}}

\put(200,30){\line(1,1){15}}
\put(200,30){\line(1,-1){15}}
\put(215,45){\line(0,-1){30}}
\put(245,45){\line(0,-1){30}}

\put(227,47){$e_1$}
\put(227,8){$e_2$}
\put(215,45){\line(1,0){30}}
\put(215,15){\line(1,0){30}}

\put(215,45){\line(-1,2){15}}
\put(215,45){\line(0,1){30}}
\put(215,45){\line(1,1){30}}

\put(200,75){\circle*{5}}
\put(215,75){\circle*{5}}
\put(245,75){\circle*{5}}
\put(221,72){$\cdots$}

\put(215,15){\line(-1,-2){15}}
\put(215,15){\line(0,-1){30}}
\put(215,15){\line(1,-1){30}}

\put(200,-15){\circle*{5}}
\put(215,-15){\circle*{5}}
\put(245,-15){\circle*{5}}
\put(221,-18){$\cdots$}
\put(215,-30){\small $\mathbb{U}^3_n$}

\put(290,30){\circle*{5}}
\put(305,45){\circle*{5}}
\put(305,15){\circle*{5}}
\put(335,30){\circle*{5}}
\put(335,45){\circle*{5}}

\put(290,30){\line(1,1){15}}
\put(290,30){\line(1,-1){15}}
\put(305,45){\line(0,-1){30}}

\put(317,40){$e_1$}
\put(317,15){$e_2$}

\put(305,45){\line(2,-1){30}}
\put(305,15){\line(2,1){30}}
\put(335,30){\line(0,1){15}}

\put(305,45){\line(-1,2){15}}
\put(305,45){\line(0,1){30}}
\put(305,45){\line(1,1){30}}

\put(290,75){\circle*{5}}
\put(305,75){\circle*{5}}
\put(335,75){\circle*{5}}
\put(311,72){$\cdots$}

\put(305,15){\line(-1,-2){15}}
\put(305,15){\line(0,-1){30}}
\put(305,15){\line(1,-1){30}}

\put(290,-15){\circle*{5}}
\put(305,-15){\circle*{5}}
\put(335,-15){\circle*{5}}
\put(311,-18){$\cdots$}
\put(305,-30){\small $\mathbb{U}^4_n$}

\put(380,30){\circle*{5}}
\put(395,45){\circle*{5}}
\put(395,15){\circle*{5}}
\put(425,45){\circle*{5}}
\put(425,15){\circle*{5}}
\put(395,30){\circle*{5}}

\put(380,30){\line(1,1){15}}
\put(380,30){\line(1,-1){15}}
\put(395,45){\line(0,-1){30}}
\put(425,45){\line(0,-1){30}}

\put(407,47){$e_1$}
\put(407,8){$e_2$}
\put(395,45){\line(1,0){30}}
\put(395,15){\line(1,0){30}}

\put(395,45){\line(-1,2){15}}
\put(395,45){\line(0,1){30}}
\put(395,45){\line(1,1){30}}

\put(380,75){\circle*{5}}
\put(395,75){\circle*{5}}
\put(425,75){\circle*{5}}
\put(401,72){$\cdots$}

\put(395,15){\line(-1,-2){15}}
\put(395,15){\line(0,-1){30}}
\put(395,15){\line(1,-1){30}}

\put(380,-15){\circle*{5}}
\put(395,-15){\circle*{5}}
\put(425,-15){\circle*{5}}
\put(401,-18){$\cdots$}
\put(395,-30){\small $\mathbb{U}^5_n$}
\end{picture}

\vspace{1cm}
\begin{picture}(500,100)
\put(0,30){\circle*{5}}
\put(15,45){\circle*{5}}
\put(15,15){\circle*{5}}
\put(45,30){\circle*{5}}
\put(45,45){\circle*{5}}
\put(15,30){\circle*{5}}

\put(0,30){\line(1,1){15}}
\put(0,30){\line(1,-1){15}}
\put(15,45){\line(0,-1){30}}

\put(27,40){$e_1$}
\put(27,15){$e_2$}
\put(15,45){\line(2,-1){30}}
\put(15,15){\line(2,1){30}}
\put(45,30){\line(0,1){15}}

\put(15,45){\line(-1,2){15}}
\put(15,45){\line(0,1){30}}
\put(15,45){\line(1,1){30}}

\put(0,75){\circle*{5}}
\put(15,75){\circle*{5}}
\put(45,75){\circle*{5}}
\put(21,72){$\cdots$}

\put(15,15){\line(-1,-2){15}}
\put(15,15){\line(0,-1){30}}
\put(15,15){\line(1,-1){30}}

\put(0,-15){\circle*{5}}
\put(15,-15){\circle*{5}}
\put(45,-15){\circle*{5}}
\put(45,-15){\circle*{5}}

\put(21,-18){$\cdots$}
\put(15,-30){\small $\mathbb{U}^6_n$}

\put(90,15){\circle*{5}}
\put(90,45){\circle*{5}}
\put(120,45){\circle*{5}}
\put(120,15){\circle*{5}}
\put(150,45){\circle*{5}}
\put(150,15){\circle*{5}}

\put(90,45){\line(0,-1){30}}
\put(90,15){\line(1,0){30}}
\put(90,45){\line(1,0){30}}
\put(120,45){\line(0,-1){30}}
\put(150,45){\line(0,-1){30}}

\put(120,45){\line(1,0){30}}
\put(120,15){\line(1,0){30}}

\put(120,45){\line(-1,2){15}}
\put(120,45){\line(0,1){30}}
\put(120,45){\line(1,1){30}}

\put(105,75){\circle*{5}}
\put(120,75){\circle*{5}}
\put(150,75){\circle*{5}}
\put(126,72){$\cdots$}

\put(120,15){\line(-1,-2){15}}
\put(120,15){\line(0,-1){30}}
\put(120,15){\line(1,-1){30}}

\put(105,-15){\circle*{5}}
\put(120,-15){\circle*{5}}
\put(150,-15){\circle*{5}}
\put(126,-17){$\cdots$}
\put(110,-30){\small $\mathbb{U}^7_n$}

\put(195,30){\circle*{5}}
\put(210,45){\circle*{5}}
\put(210,15){\circle*{5}}
\put(240,45){\circle*{5}}
\put(240,15){\circle*{5}}

\put(195,30){\line(0,1){15}}
\put(195,45){\circle*{5}}

\put(195,30){\line(1,1){15}}
\put(195,30){\line(1,-1){15}}
\put(210,45){\line(0,-1){30}}
\put(240,45){\line(0,-1){30}}

\put(210,45){\line(1,0){30}}
\put(210,15){\line(1,0){30}}

\put(210,45){\line(-1,2){15}}
\put(210,45){\line(0,1){30}}
\put(210,45){\line(1,1){30}}

\put(195,75){\circle*{5}}
\put(210,75){\circle*{5}}
\put(240,75){\circle*{5}}
\put(216,72){$\cdots$}

\put(210,15){\line(-1,-2){15}}
\put(210,15){\line(0,-1){30}}
\put(210,15){\line(1,-1){30}}

\put(195,-15){\circle*{5}}
\put(210,-15){\circle*{5}}
\put(240,-15){\circle*{5}}
\put(216,-17){$\cdots$}
\put(210,-30){\small $\mathbb{U}^8_n$}

\put(285,30){\circle*{5}}
\put(300,45){\circle*{5}}
\put(300,15){\circle*{5}}
\put(330,30){\circle*{5}}
\put(330,45){\circle*{5}}
\put(285,45){\circle*{5}}

\put(285,30){\line(0,1){15}}
\put(285,30){\line(1,1){15}}
\put(285,30){\line(1,-1){15}}
\put(300,45){\line(0,-1){30}}


\put(300,45){\line(2,-1){30}}
\put(300,15){\line(2,1){30}}
\put(330,30){\line(0,1){15}}

\put(300,45){\line(-1,2){15}}
\put(300,45){\line(0,1){30}}
\put(300,45){\line(1,1){30}}

\put(285,75){\circle*{5}}
\put(300,75){\circle*{5}}
\put(330,75){\circle*{5}}
\put(306,72){$\cdots$}

\put(300,15){\line(-1,-2){15}}
\put(300,15){\line(0,-1){30}}
\put(300,15){\line(1,-1){30}}

\put(285,-15){\circle*{5}}
\put(300,-15){\circle*{5}}
\put(330,-15){\circle*{5}}
\put(306,-17){$\cdots$}
\put(300,-30){\small $\mathbb{U}^9_n$}

\put(375,15){\circle*{5}}
\put(375,45){\circle*{5}}
\put(405,45){\circle*{5}}
\put(405,15){\circle*{5}}
\put(435,45){\circle*{5}}
\put(435,15){\circle*{5}}
\put(405,30){\circle*{5}}

\put(375,45){\line(0,-1){30}}
\put(375,15){\line(1,0){30}}
\put(375,45){\line(1,0){30}}
\put(405,45){\line(0,-1){30}}
\put(435,45){\line(0,-1){30}}

\put(405,45){\line(1,0){30}}
\put(405,15){\line(1,0){30}}

\put(405,45){\line(-1,2){15}}
\put(405,45){\line(0,1){30}}
\put(405,45){\line(1,1){30}}

\put(390,75){\circle*{5}}
\put(405,75){\circle*{5}}
\put(435,75){\circle*{5}}
\put(411,72){$\cdots$}

\put(405,15){\line(-1,-2){15}}
\put(405,15){\line(0,-1){30}}
\put(405,15){\line(1,-1){30}}

\put(390,-15){\circle*{5}}
\put(405,-15){\circle*{5}}
\put(435,-15){\circle*{5}}
\put(411,-17){$\cdots$}
\put(395,-30){\small $\mathbb{U}^{10}_n$}
\end{picture}

\vspace{1cm}
\begin{picture}(500,100)
\put(0,30){\circle*{5}}
\put(15,45){\circle*{5}}
\put(15,15){\circle*{5}}
\put(45,45){\circle*{5}}
\put(45,15){\circle*{5}}
\put(15,30){\circle*{5}}

\put(0,30){\line(0,1){15}}
\put(0,45){\circle*{5}}

\put(0,30){\line(1,1){15}}
\put(0,30){\line(1,-1){15}}
\put(15,45){\line(0,-1){30}}
\put(45,45){\line(0,-1){30}}

\put(15,45){\line(1,0){30}}
\put(15,15){\line(1,0){30}}

\put(15,45){\line(-1,2){15}}
\put(15,45){\line(0,1){30}}
\put(15,45){\line(1,1){30}}

\put(0,75){\circle*{5}}
\put(15,75){\circle*{5}}
\put(45,75){\circle*{5}}
\put(21,72){$\cdots$}

\put(15,15){\line(-1,-2){15}}
\put(15,15){\line(0,-1){30}}
\put(15,15){\line(1,-1){30}}

\put(0,-15){\circle*{5}}
\put(15,-15){\circle*{5}}
\put(45,-15){\circle*{5}}
\put(21,-17){$\cdots$}
\put(15,-30){\small $\mathbb{U}^{11}_n$}

\put(90,30){\circle*{5}}
\put(105,45){\circle*{5}}
\put(105,15){\circle*{5}}
\put(135,30){\circle*{5}}
\put(135,45){\circle*{5}}
\put(90,45){\circle*{5}}
\put(105,30){\circle*{5}}

\put(90,30){\line(0,1){15}}
\put(90,30){\line(1,1){15}}
\put(90,30){\line(1,-1){15}}
\put(105,45){\line(0,-1){30}}


\put(105,45){\line(2,-1){30}}
\put(105,15){\line(2,1){30}}
\put(135,30){\line(0,1){15}}

\put(105,45){\line(-1,2){15}}
\put(105,45){\line(0,1){30}}
\put(105,45){\line(1,1){30}}

\put(90,75){\circle*{5}}
\put(105,75){\circle*{5}}
\put(135,75){\circle*{5}}
\put(111,72){$\cdots$}

\put(105,15){\line(-1,-2){15}}
\put(105,15){\line(0,-1){30}}
\put(105,15){\line(1,-1){30}}

\put(90,-15){\circle*{5}}
\put(105,-15){\circle*{5}}
\put(135,-15){\circle*{5}}
\put(111,-17){$\cdots$}
\put(105,-30){\small $\mathbb{U}^{12}_n$}

\put(180,15){\circle*{5}}
\put(180,45){\circle*{5}}
\put(210,45){\circle*{5}}
\put(210,15){\circle*{5}}
\put(240,45){\circle*{5}}
\put(240,15){\circle*{5}}
\put(210,25){\circle*{5}}
\put(210,35){\circle*{5}}

\put(180,45){\line(0,-1){30}}
\put(180,15){\line(1,0){30}}
\put(180,45){\line(1,0){30}}
\put(210,45){\line(0,-1){30}}
\put(240,45){\line(0,-1){30}}

\put(210,45){\line(1,0){30}}
\put(210,15){\line(1,0){30}}

\put(210,45){\line(-1,2){15}}
\put(210,45){\line(0,1){30}}
\put(210,45){\line(1,1){30}}

\put(195,75){\circle*{5}}
\put(210,75){\circle*{5}}
\put(240,75){\circle*{5}}
\put(216,72){$\cdots$}

\put(210,15){\line(-1,-2){15}}
\put(210,15){\line(0,-1){30}}
\put(210,15){\line(1,-1){30}}

\put(195,-15){\circle*{5}}
\put(210,-15){\circle*{5}}
\put(240,-15){\circle*{5}}
\put(216,-17){$\cdots$}
\put(200,-30){\small $\mathbb{U}^{13}_n$}

\put(285,15){\circle*{5}}
\put(285,45){\circle*{5}}
\put(315,45){\circle*{5}}
\put(315,15){\circle*{5}}
\put(345,45){\circle*{5}}
\put(345,15){\circle*{5}}
\put(315,35){\circle*{5}}

\put(315,35){\line(1,-1){10}}
\put(325,25){\circle*{5}}

\put(285,45){\line(0,-1){30}}
\put(285,15){\line(1,0){30}}
\put(285,45){\line(1,0){30}}
\put(315,45){\line(0,-1){30}}
\put(345,45){\line(0,-1){30}}

\put(315,45){\line(1,0){30}}
\put(315,15){\line(1,0){30}}

\put(315,45){\line(-1,2){15}}
\put(315,45){\line(0,1){30}}
\put(315,45){\line(1,1){30}}

\put(300,75){\circle*{5}}
\put(315,75){\circle*{5}}
\put(345,75){\circle*{5}}
\put(321,72){$\cdots$}

\put(315,15){\line(-1,-2){15}}
\put(315,15){\line(0,-1){30}}
\put(315,15){\line(1,-1){30}}

\put(300,-15){\circle*{5}}
\put(315,-15){\circle*{5}}
\put(345,-15){\circle*{5}}
\put(321,-17){$\cdots$}
\put(305,-30){\small $\mathbb{U}^{14}_n$}

\put(385,30){\line(0,1){15}}
\put(385,45){\circle*{5}}

\put(385,30){\circle*{5}}
\put(400,45){\circle*{5}}
\put(400,15){\circle*{5}}
\put(430,45){\circle*{5}}
\put(430,15){\circle*{5}}
\put(400,35){\circle*{5}}

\put(400,35){\line(1,-1){10}}
\put(410,25){\circle*{5}}

\put(385,30){\line(1,1){15}}
\put(385,30){\line(1,-1){15}}
\put(400,45){\line(0,-1){30}}
\put(430,45){\line(0,-1){30}}

\put(400,45){\line(1,0){30}}
\put(400,15){\line(1,0){30}}

\put(400,45){\line(-1,2){15}}
\put(400,45){\line(0,1){30}}
\put(400,45){\line(1,1){30}}

\put(385,75){\circle*{5}}
\put(400,75){\circle*{5}}
\put(430,75){\circle*{5}}
\put(406,72){$\cdots$}

\put(400,15){\line(-1,-2){15}}
\put(400,15){\line(0,-1){30}}
\put(400,15){\line(1,-1){30}}

\put(385,-15){\circle*{5}}
\put(400,-15){\circle*{5}}
\put(430,-15){\circle*{5}}
\put(406,-17){$\cdots$}
\put(400,-30){\small $\mathbb{U}^{15}_n$}
\end{picture}

\vspace{0.9cm}
\begin{picture}(500,100)
\put(0,30){\circle*{5}}
\put(15,45){\circle*{5}}
\put(15,15){\circle*{5}}
\put(45,30){\circle*{5}}
\put(45,45){\circle*{5}}
\put(0,45){\circle*{5}}
\put(15,30){\circle*{5}}

\put(15,30){\line(2,-1){10}}
\put(25,25){\circle*{5}}

\put(0,30){\line(0,1){15}}
\put(0,30){\line(1,1){15}}
\put(0,30){\line(1,-1){15}}
\put(15,45){\line(0,-1){30}}


\put(15,45){\line(2,-1){30}}
\put(15,15){\line(2,1){30}}
\put(45,30){\line(0,1){15}}

\put(15,45){\line(-1,2){15}}
\put(15,45){\line(0,1){30}}
\put(15,45){\line(1,1){30}}

\put(0,75){\circle*{5}}
\put(15,75){\circle*{5}}
\put(45,75){\circle*{5}}
\put(23,72){$\cdots$}

\put(15,15){\line(-1,-2){15}}
\put(15,15){\line(0,-1){30}}
\put(15,15){\line(1,-1){30}}

\put(0,-15){\circle*{5}}
\put(15,-15){\circle*{5}}
\put(45,-15){\circle*{5}}
\put(21,-17){$\cdots$}
\put(15,-30){\small $\mathbb{U}^{16}_n$}

\put(90,15){\circle*{5}}
\put(90,45){\circle*{5}}
\put(120,45){\circle*{5}}
\put(120,15){\circle*{5}}
\put(150,45){\circle*{5}}
\put(150,15){\circle*{5}}

\put(90,45){\line(0,-1){30}}
\put(90,15){\line(1,0){30}}
\put(90,45){\line(1,0){30}}
\put(120,45){\line(0,-1){30}}
\put(150,45){\line(0,-1){30}}

\put(132,47){$e_1$}
\put(132,8){$e_2$}
\put(120,45){\line(1,0){30}}
\put(120,15){\line(1,0){30}}

\put(120,45){\line(-1,2){15}}
\put(120,45){\line(0,1){30}}
\put(120,45){\line(1,1){30}}

\put(105,75){\circle*{5}}
\put(120,75){\circle*{5}}
\put(150,75){\circle*{5}}
\put(126,72){$\cdots$}

\put(110,-30){\small $U^{17}_n$}

\put(195,30){\circle*{5}}
\put(210,45){\circle*{5}}
\put(210,15){\circle*{5}}
\put(240,45){\circle*{5}}
\put(240,15){\circle*{5}}

\put(195,30){\line(0,1){15}}
\put(195,45){\circle*{5}}

\put(195,30){\line(1,1){15}}
\put(195,30){\line(1,-1){15}}
\put(210,45){\line(0,-1){30}}
\put(240,45){\line(0,-1){30}}

\put(222,47){$e_1$}
\put(222,8){$e_2$}
\put(210,45){\line(1,0){30}}
\put(210,15){\line(1,0){30}}

\put(210,45){\line(-1,2){15}}
\put(210,45){\line(0,1){30}}
\put(210,45){\line(1,1){30}}

\put(195,75){\circle*{5}}
\put(210,75){\circle*{5}}
\put(240,75){\circle*{5}}
\put(216,72){$\cdots$}
\put(210,-30){\small $U^{18}_n$}

\put(285,30){\circle*{5}}
\put(300,45){\circle*{5}}
\put(300,15){\circle*{5}}
\put(330,30){\circle*{5}}
\put(330,45){\circle*{5}}
\put(285,45){\circle*{5}}

\put(285,30){\line(0,1){15}}
\put(285,30){\line(1,1){15}}
\put(285,30){\line(1,-1){15}}
\put(300,45){\line(0,-1){30}}

\put(312,40){$e_1$}
\put(312,15){$e_2$}

\put(300,45){\line(2,-1){30}}
\put(300,15){\line(2,1){30}}
\put(330,30){\line(0,1){15}}

\put(300,45){\line(-1,2){15}}
\put(300,45){\line(0,1){30}}
\put(300,45){\line(1,1){30}}

\put(285,75){\circle*{5}}
\put(300,75){\circle*{5}}
\put(330,75){\circle*{5}}
\put(306,72){$\cdots$}
\put(300,-30){\small $U^{19}_n$}

\put(5,-50){\small Fig.3\quad The structures of graphs in  $\mathbb{U}^1_n,$ $\mathbb{U}_n^2,$ \ldots, $\mathbb{U}_n^{16}$ and graphs $U^{17}_n,$ $U^{18}_n,$ $U^{19}_n$.}
\end{picture}

\vspace{2.2cm}
Clearly, the graphs $U^{17}_n,$ $U^{18}_n,$ $U^{19}_n$ are the special graphs in the  $\mathbb{U}^7_n,$ $\mathbb{U}_n^8,$ $\mathbb{U}_n^9,$ respectively.

For $i=0,1,2,3,$ let $T_n^i$ be the tree obtained by attaching $i$ paths with two vertices to the central vertex of $K_{1,n-2i-1},$
where $n\geq2i+1,$ see Fig.4. In particular, $T_n^0=K_{1,n-1}.$
\vskip0.25cm
\begin{picture}(500,100)
\put(65,45){\circle*{5}}

\put(65,45){\line(-1,2){15}}
\put(65,45){\line(0,1){30}}
\put(65,45){\line(1,1){30}}

\put(50,75){\circle*{5}}
\put(65,75){\circle*{5}}
\put(95,75){\circle*{5}}
\put(72,72){$\cdots$}

\put(65,45){\line(-1,-2){15}}
\put(65,45){\line(0,-1){30}}
\put(65,45){\line(1,-1){30}}

\put(50,15){\circle*{5}}
\put(65,15){\circle*{5}}
\put(95,15){\circle*{5}}

\put(50,15){\line(0,-1){20}}
\put(65,15){\line(0,-1){20}}
\put(95,15){\line(0,-1){20}}

\put(50,-5){\circle*{5}}
\put(65,-5){\circle*{5}}
\put(95,-5){\circle*{5}}
\put(72,-7){$\cdots$}

\put(55,88){\scriptsize $n-2i-1$}
\put(70,-22){\scriptsize $i$}
\put(45,77){ $\overbrace{\quad \quad \quad \quad}$}
\put(49,-7){$\underbrace{\quad \quad \quad \quad}$}
\put(77,-30){\small $T^i_n$}
\put(0,-47){\small Fig.4\quad The tree $T^i_n$ with $i=0,1,2,3.$}

\put(250,70){\circle*{5}}
\put(250,70){\line(1,0){20}}
\put(270,70){\circle*{5}}
\put(278,67){$\cdots$}
\put(300,70){\circle*{5}}
\put(300,70){\line(1,0){20}}
\put(320,70){\circle*{5}}

\put(320,70){\line(-1,2){12}}
\put(320,70){\line(0,1){23}}
\put(320,70){\line(1,1){22}}
\put(308,93){\circle*{5}}
\put(320,93){\circle*{5}}
\put(342,93){\circle*{5}}
\put(324,90){$\cdots$}

\put(320,70){\line(1,0){20}}
\put(340,70){\circle*{5}}
\put(348,67){$\cdots$}
\put(370,70){\line(1,0){20}}
\put(370,70){\circle*{5}}
\put(390,70){\circle*{5}}

\put(250,70){\line(0,-1){20}}
\put(250,50){\circle*{5}}
\put(248,30){$\vdots$}
\put(250,20){\circle*{5}}
\put(250,20){\line(0,-1){20}}
\put(250,0){\circle*{5}}
\put(250,0){\line(1,0){20}}
\put(270,0){\circle*{5}}
\put(278,-3){$\cdots$}
\put(300,0){\circle*{5}}
\put(300,0){\line(1,0){20}}
\put(320,0){\circle*{5}}
\put(320,0){\line(1,0){20}}
\put(340,0){\circle*{5}}
\put(348,-3){$\cdots$}
\put(370,0){\circle*{5}}
\put(370,0){\line(1,0){20}}
\put(390,0){\circle*{5}}

\put(390,0){\line(0,1){20}}
\put(390,20){\circle*{5}}
\put(388,30){$\vdots$}
\put(390,50){\circle*{5}}
\put(390,50){\line(0,1){20}}

\put(320,70){\line(0,-1){20}}
\put(320,50){\circle*{5}}
\put(318,30){$\vdots$}
\put(320,20){\circle*{5}}
\put(320,20){\line(0,-1){20}}

\put(320,0){\line(-1,-2){12}}
\put(320,0){\line(0,-1){23}}
\put(320,0){\line(1,-1){22}}
\put(308,-23){\circle*{5}}
\put(320,-23){\circle*{5}}
\put(342,-23){\circle*{5}}
\put(324,-26){$\cdots$}

\put(322,62){$v_1$}
\put(300,61){$v_3$}
\put(340,61){$v_5$}

\put(325,45){$v_7$}
\put(325,19){$v_8$}
\put(322,4){$v_2$}
\put(300,4){$v_4$}
\put(340,4){$v_6$}
\put(280,35){$C_p$}
\put(350,35){$C_q$}
\put(215,-47){\small Fig.5\quad The graph $G$ in the proof of Theorem \ref{thm414}.}
\end{picture}

\vspace{2cm}
From Lemma \ref{lem23} and Lemma 4.6 in \cite{2012b}, we have

\begin{lem}\label{lem413}
{\rm (i)} For $n\geq6,$ we have $1<q_2(T_n^2)<2.7,$ $S_2^+(T_n^2)<e(T^2_n)+2.$
{\rm (ii)}For $n\geq7,$ we have $1<q_2(T_n^3)<2.7,$ $S_2^+(T_n^3)<e(T^3_n)+2.$
\end{lem}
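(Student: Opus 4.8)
\emph{Proof proposal.} Both $T_n^2$ and $T_n^3$ are trees, hence bipartite, so by Lemma~\ref{lem23} their signless Laplacian spectra coincide with their Laplacian spectra; thus $q_j(T_n^i)=\mu_j(T_n^i)$ for all $j$ and $S^+_2(T_n^i)=S_2(T_n^i)$, and the two displayed bounds are exactly the Laplacian statements, which are Lemma~4.6 of \cite{2012b}. For a self-contained argument one can instead compute the characteristic polynomials directly from Lemma~\ref{lem26}, as follows. Throughout write $x_0=\frac{3+\sqrt5}{2}$ for the larger root of $x^2-3x+1$, so that $x_0^2=3x_0-1$ and $x_0=1+\frac{1+\sqrt5}{2}$.

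For (i), regard $T_n^2$ as the graph obtained from $H:=P_5$ by attaching $n-5$ pendant vertices at the centre $v$ of $P_5$. Since $P_5$ is bipartite, $\psi(P_5,x)=x(x^2-3x+1)(x^2-5x+5)$, and deleting $v$ from $Q(P_5)$ leaves two $2\times2$ blocks each with characteristic polynomial $x^2-3x+1$, so $\psi(Q_v(P_5))=(x^2-3x+1)^2$. Feeding this into Lemma~\ref{lem26} and factoring gives
\[
\psi(T_n^2,x)=x(x-1)^{n-6}(x^2-3x+1)\,g(x),\qquad g(x)=x^3-(n+1)x^2+(3n-5)x-n .
\]
Using $x_0^2=3x_0-1$ one gets $g(x_0)=-2<0$, while $g(2)=n-6\ge0$, $g(1)=n-5>0$ and $g(0)=-n<0$ for $n\ge6$; hence $g$ has one root in $(0,1)$, one root in $[2,x_0)$, and its largest root $r_1$ exceeds $x_0$. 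Therefore $q_1(T_n^2)=r_1$ and $q_2(T_n^2)=x_0\in(1,2.7)$, the first claim. Since the three roots of $g$ sum to $n+1$, the bound $S^+_2(T_n^2)=r_1+x_0<e(T_n^2)+2=n+1$ is equivalent to $r_1<n+1-x_0$, i.e.\ to $g(n+1-x_0)>0$; substituting and simplifying turns the left side into a quadratic in $n$ with positive leading coefficient, both of whose zeros are smaller than $6$, so the inequality holds for all $n\ge6$.

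Part (ii) runs identically with $H$ the spider $S(2,2,2)$ on $7$ vertices (three paths of length two sharing an endpoint): one has $\psi(S(2,2,2),x)=x(x^2-6x+7)(x^2-3x+1)^2$, and deleting the centre $v$ gives $\psi(Q_v(S(2,2,2)))=(x^2-3x+1)^3$, whence
\[
\psi(T_n^3,x)=x(x-1)^{n-8}(x^2-3x+1)^2\,\tilde g(x),\qquad \tilde g(x)=x^3-nx^2+(3n-8)x-n .
\]
Now $\tilde g(x_0)=-3<0$, $\tilde g(2)=n-8\ge0$ and $\tilde g(1)=n-7>0$ for $n\ge8$, so $q_1(T_n^3)$ is the largest root $r_1$ of $\tilde g$ and $q_2(T_n^3)=x_0\in(1,2.7)$ ($x_0$ now occurring among the eigenvalues with multiplicity two); the sum bound $S^+_2(T_n^3)=r_1+x_0<n+1=e(T_n^3)+2$ again reduces to $\tilde g(n+1-x_0)>0$, a quadratic-in-$n$ sign check valid for $n\ge8$. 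The boundary case $n=7$, where $T_7^3=S(2,2,2)$ itself and the pendant-vertex reduction is vacuous, is settled by reading the two largest eigenvalues $3+\sqrt2$ and $x_0$ directly off $\psi(S(2,2,2),x)$.

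The only step that is not mechanical is the sharp additive constant: Brouwer's inequality at $k=2$ yields only $S_2<e+3$, so obtaining $S_2<e+2$ forces a genuine localisation of the dominant eigenvalue $r_1$ — equivalently, a proof that the two small roots of the relevant cubic sum to more than $x_0$ — and this is precisely what the sign of $g$ (resp.\ $\tilde g$) at $n+1-x_0$ delivers; everything else is explicit characteristic-polynomial bookkeeping via Lemma~\ref{lem26}.
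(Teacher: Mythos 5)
Your opening paragraph is precisely the paper's proof: since $T_n^2$ and $T_n^3$ are trees, hence bipartite, Lemma~\ref{lem23} identifies their signless Laplacian and Laplacian spectra, and the statement is then Lemma~4.6 of \cite{2012b} verbatim. The additional self-contained derivation via Lemma~\ref{lem26} is a correct (and verifiable) bonus — the factorizations, the values $g(x_0)=-2$, $\tilde g(x_0)=-3$, $g(2)=n-6$, $\tilde g(2)=n-8$, and the resulting localisation $q_2=\frac{3+\sqrt5}{2}<2.7$ all check out — but it goes beyond what the paper supplies.
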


\begin{thm}\label{thm414}
Let $n,k$ be positive integers with $n\geq11$ and $1\leq k\leq n,$ and $G$ be a $\theta$-type bicyclic graph with $n$ vertices. Then
\vskip0.2cm
\hskip.5cm $S^+_k(G)\leq e(G)+\binom{k+1}{2}.$
\end{thm}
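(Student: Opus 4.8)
By Theorem~\ref{thm42} the inequality holds for every $k\neq 3$, so it suffices to prove $S_3^+(G)\le e(G)+6$ for a $\theta$-type bicyclic graph $G$ on $n\ge 11$ vertices. I would argue by induction on $n$, the base being the known case $n\le 10$. If $G$ is not obtained from $\theta(p,q,t)$ by attaching \emph{pendant} vertices only, then $G$ has a hanging tree $T$ with $|V(T)|\ge 2$ such that $G-T$ is again a $\theta$-type bicyclic graph, on at most $n-2$ vertices, for which Conjecture~\ref{con21} holds by the induction hypothesis (or by the ten-vertex case when $|V(G-T)|\le 10$); Corollary~\ref{cor44} then gives the bound for $G$. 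Hence I may assume $G$ is $\theta(p,q,t)$ itself or is $\theta(p,q,t)$ with some pendant vertices attached, where $p,q\ge 3$ and $2\le t\le\min\{(p+2)/2,(q+2)/2\}$.

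Let $v_1,v_t$ be the two vertices of degree three in $\theta(p,q,t)$ (the ends of the common path $P_t$), and let $A_p,A_q$ be the two $v_1$--$v_t$ paths other than $P_t$, of lengths $p-t+1$ and $q-t+1$; the constraint on $t$ forces $|A_p|,|A_q|\ge 2$, so $C_{p+q-2t+2}$ has length at least $4$. The decisive observation is: \emph{if some vertex $w\in V(\theta(p,q,t))\setminus\{v_1,v_t\}$ carries at least two pendant vertices}, then deleting the two $\theta$-edges at $w$ writes $G=G_1\approx G_2$, where $G_1$ is the star on $w$ and its pendants (a tree, so the tree bound (1) applies) and $G_2=G-G_1$ is connected with $e(G_2)=|V(G_2)|$, i.e.\ unicyclic (Theorem~\ref{thm41}); since $e(G_1)\ge 2$ and $e(G_2)\ge 3$, Lemma~\ref{lem44} yields $S_3^+(G)\le e(G)+6$. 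The same device handles the configurations in which one of $P_t,A_p,A_q$ has length at least four, or in which $G$ has a hanging tree isomorphic to $T_n^2$ or $T_n^3$: one deletes two edges of the long path (respectively peels off the subtree, using Lemma~\ref{lem413} in place of (1)) so as to split off either a caterpillar tree with at least two edges, to which Lemma~\ref{lem44} applies, or a copy of $K_2$, to which Lemma~\ref{lem410} applies, the remainder being unicyclic.

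What survives is a finite list of shapes: the pendant vertices must be concentrated at a bounded set of vertices and all of $P_t,A_p,A_q$ have length at most three, which leaves only finitely many triples $(p,q,t)$ and finitely many pendant patterns — precisely the graphs in the families $\mathbb{U}_n^1,\dots,\mathbb{U}_n^{16}$ and the graphs $U_n^{17},U_n^{18},U_n^{19}$ of Fig.~3. For each family I would either exhibit two edges $e_1,e_2$ so that $G-\{e_1,e_2\}$ is the disjoint union of a $K_2$ and a graph $U_n^i(a,b)$ as in Lemma~\ref{lem410}, concluding at once; or, for the few graphs not amenable to this, compute $\psi(G,x)$ by repeated use of Lemma~\ref{lem26}, deduce $q_3(G)<2$ (exactly as in Lemma~\ref{lem47}), and conclude $S_3^+(G)=q_1+q_2+q_3<S_2^+(G)+2\le e(G)+3+2<e(G)+6$, sharpening to a coefficient estimate for $\psi(G,x)$ in the few cases where this crude bound is not enough.

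The main obstacle is the last step. One must be certain that the nineteen types in Fig.~3 genuinely exhaust the graphs left after the reductions above, and then carry out the characteristic-polynomial computations and eigenvalue estimates for the sporadic graphs $U_n^{17},U_n^{18},U_n^{19}$, for which the target $e(G)+6$ is met up to lower-order terms and so leaves essentially no slack; the crux in each residual case is to select a pair of edges that realizes a decomposition of exactly the form demanded by Lemma~\ref{lem410} (or Lemma~\ref{lem44}).
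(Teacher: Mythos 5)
Your overall architecture coincides with the paper's: reduce to $k=3$ via Theorem~\ref{thm42}, strip hanging trees with Corollary~\ref{cor44}, use the $\approx$-decomposition together with Inequality (1), Theorem~\ref{thm41} and Lemma~\ref{lem44} to dispose of every configuration in which a branch of the theta can be split off as a tree with at least two edges, and invoke Lemma~\ref{lem410} for several of the residual shapes. Up to that point your proposal and the paper's Case 1 / Subcases 2.2 and 2.3.1 are the same argument in slightly different clothing.

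The genuine gap is in your endgame. The residual families $\mathbb{U}_n^1,\dots,\mathbb{U}_n^{16}$ are not finitely many graphs: each is an infinite two-parameter family, because arbitrarily many pendant vertices may sit at the two degree-three vertices $v_1,v_2$, and none of your reductions removes them (your ``decisive observation'' explicitly excludes $w\in\{v_1,v_t\}$). For the families not covered by Lemma~\ref{lem410} you propose to compute $\psi(G,x)$ and deduce $q_3(G)<2$ ``exactly as in Lemma~\ref{lem47}''. But Lemma~\ref{lem47} concerns the \emph{unicyclic} graph $U_n^1(a,b)$; for the bicyclic graphs in question the claim $q_3(G)<2$ is neither established nor safe --- already for $K_4-e$ (the core of $\mathbb{U}_n^1$) one has $q_3=2$, and edge/pendant additions only push eigenvalues up, so at best you could hope for $q_3\le 2$, which would itself require a Lemma~\ref{lem47}-style quintic analysis for each of roughly a dozen two-parameter families. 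The paper sidesteps this entirely with a device absent from your proposal: delete the \emph{three} cycle-edges at $v_1$, so that $G'=G_4\cup G_5$ is a forest with $G_4$ a star and $G_5\in\{T_{n_5}^0,T_{n_5}^2,T_{n_5}^3\}$; then Lemma~\ref{lem413} identifies the three largest eigenvalues of $G'$ and gives $S_3^+(G')\le n_4+e(G_5)+2$, whence $S_3^+(G)\le S_3^+(G')+3S_3^+(K_2)\le e(G)+6$ by Lemma~\ref{lem25}. Without either that device or a completed $q_3\le2$ analysis (plus the $k=2$ bound $S_2^+(G)\le e(G)+3$ you implicitly use), your proof does not close; as written it is a plan whose hardest step is deferred exactly where the paper does the real work.
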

\begin{proof}
From Theorem \ref{thm42}, we only need to prove the case of $k=3.$

Since $G$ is a $\theta$-type bicyclic graph, then $G$ is obtained by attaching some hanging trees to $\theta(p,q,t)$ where $p,q\geq3$ and $2\leq t\leq \min\{\frac{p+2}{2},\frac{q+2}{2}\}.$ By Corollary \ref{cor44}, it will suffice to consider
the $\theta$-type bicyclic graph which is obtained by attaching some pendent vertices to $\theta(p,q,t)$ or $G\cong\theta(p,q,t).$

Let $A$ be the set of the common vertices of $C_{p}$ and $C_{q}.$
Let $v_1$, $v_2$ be the two vertices which are the common vertices of the three cycles in $G$.
Let $v_3$, $v_4$ ($v_3$ and $v_4$ may be the same vertex) be the neighbor of $v_1$ and $v_2$ in $V(C_{p})\setminus A,$
$v_5$, $v_6$ ($v_5$ and $v_6$ may be the same vertex) be the neighbor of $v_1$ and $v_2$ in $V(C_{q})\setminus A,$ respectively.
Let $G_1$ be the component of $G-\{v_1v_3, v_2v_4\}$ containing $v_3,$ $G_2$ be the component of $G-\{v_1v_5, v_2v_6\}$ containing $v_5.$
If $|A|\geq3,$ then let $v_7$ and $v_8$ be the neighbor of $v_1$ and $v_2$ in $A,$ respectively ($v_7=v_8$ if $|A|=3$).
Let $G_3$ be the component of $G-\{v_1v_7, v_2v_8\}$ containing $v_7.$ Furthermore, if $|A|=2,$  we define $e(G_3)=0.$

{\bf Case 1:} $e(G_1)\geq2$ or $e(G_2)\geq2$ or $e(G_3)\geq 2.$

Without loss of generality, we suppose $e(G_1)\geq2.$ Then $G$ can be considered as $G_1\approx (G-G_1)$ in which the inserted edges are $v_1v_3$
and $v_2v_4$. Thus by Inequality (1), Theorem \ref{thm41} and Lemma \ref{lem44}, $S^+_3(G)\leq e(G)+6.$

{\bf Case 2:} $e(G_1)\leq1$ and $e(G_2)\leq1$ and $e(G_3)\leq1.$

We show the structure of $G$ in each case in the following.

\vskip0.2cm

\hskip.5cm
\begin{tabular}{|c|c|c|c|}
  \hline
  $e(G_1)$ & $e(G_2)$ & $e(G_3)$ & The structure of $G$       \\ \hline
  $0$      & $0$      &  $0$     &$\mathbb{U}^1_n,$ $\mathbb{U}^2_n$   \\ \hline
  $0$      & $1$      &  $0$     &$\mathbb{U}^3_n,$ $\mathbb{U}^4_n,$ $\mathbb{U}^5_n,$ $\mathbb{U}^6_n$    \\ \hline
  $1$      & $0$      &  $0$     &$\mathbb{U}^3_n,$ $\mathbb{U}^4_n,$ $\mathbb{U}^5_n,$ $\mathbb{U}^6_n$    \\ \hline
  $1$      & $1$      &  $0$     &$\mathbb{U}^7_n,$ $\mathbb{U}^8_n,$ $\mathbb{U}^9_n,$ $\mathbb{U}^{10}_n,$ $\mathbb{U}^{11}_n,$ $\mathbb{U}^{12}_n$    \\ \hline
  $0$      & $0$      &  $1$     &$\mathbb{U}^{6}_n$   \\ \hline
  $0$      & $1$      &  $1$     &$\mathbb{U}^{11}_n,$ $\mathbb{U}^{12}_n$   \\ \hline
  $1$      & $0$      &  $1$     &$\mathbb{U}^{11}_n,$ $\mathbb{U}^{12}_n$   \\ \hline
  $1$      & $1$      &  $1$     &$\mathbb{U}^{13}_n,$ $\mathbb{U}^{14}_n,$ $\mathbb{U}^{15}_n,$ $\mathbb{U}^{16}_n$    \\ \hline
\end{tabular}
\vskip.2cm
\hskip1.5cm
 Table 1. The structure of $G$ in each case.
\vskip0.2cm
Now we show for any $G\in\cup_{i=1}^{16}\mathbb{U}_n^i,$ $S^+_3(G)\leq e(G)+6.$
Without loss of generality, suppose $d_{v_1}\geq d_{v_2}.$ Note that $n\geq11,$ it implies $d_{v_1}\geq5.$

Let $G'=G-\{v_1v_3,v_1v_5,v_1v_7\}$ if $|A|\geq3$ and $G'=G-\{v_1v_3,v_1v_5,v_1v_2\}$ if $|A|=2.$ Let $G_4$ be the component of $G'$ containing $v_1,$ $G_5$ be the component of $G'$ containing $v_2.$ Then $G'=G_4\cup G_5$ and both $G_4, G_5$ are trees. Let $n_i=|V(G_i)|,$ where $i=4,5$ and $n_4+n_5=n.$
Obviously, $G_4\cong T_{n_4}^0$ with $n_4\geq3,$ which implies that $q_1(G_4)=n_4,$ $q_2(G_4)=1.$
\vskip0.1cm
{\bf Subcase 2.1:} $G\in\mathbb{U}_n^1\cup\mathbb{U}_n^2.$

Then $G_5\cong T^0_{n_5},$ then the first three largest singless Laplacian eigenvalues of $G'$ are $n_4,n_5,1,$ that is, $S^+_3(G')=n_4+n_5+1=n+1.$ By Lemma \ref{lem25},

\hskip.5cm $S^+_3(G)\leq S^+_3(G')+3S^+_3(K_2)=n+1+6=e(G)+6.$

{\bf Subcase 2.2:} $G\in\mathop{\cup}\limits_{i=3}^6\mathbb{U}_n^i.$

For each graph $G,$ let $e_1,$ $e_2$ be the edges as labeled in Fig.3. Then the result follows from Lemma \ref{lem410}.

{\bf Subcase 2.3:} $G\in\mathop{\cup}\limits_{i=7}^{12}\mathbb{U}_n^i.$ Then $G_5\cong T^2_{n_5}.$

{\bf Subcase 2.3.1:} $n_5=5.$

Then $G\in \mathop{\cup}\limits_{i=17}^{19}U_n^i.$
For each graph $G,$ let $e_1,$ $e_2$ be the edges as labeled in Fig.3. Then the result follows from Lemma \ref{lem410}.

{\bf Subcase 2.3.2:} $n_5\geq6.$
Then by Lemma \ref{lem413}, we have $1<q_2(G_5)<2.7<3\leq n_4=q_1(G_4),$ which implies that the first three largest signless Laplacian eigenvalues
 of $G'$ are $q_1(G_4)=n_4, q_1(G_5)=n_5, q_2(G_5),$ that is, $S^+_3(G')=n_4+S^+_2(G_5).$ By Lemma \ref{lem25} and (i) of Lemma \ref{lem413}, we have

$S^+_3(G)\leq S^+_3(G')+3S^+_3(K_2)=n_4+S^+_2(G_5)+6<(n_4+e(G_5)+2)+6=e(G)+6.$

{\bf Subcase 2.4:} $G\in\mathop{\cup}\limits_{i=13}^{16}\mathbb{U}_n^i.$

Then $G_5\cong T^3_{n_5}$ with $n_5\geq7.$ By Lemma \ref{lem25} and (ii) of Lemma \ref{lem413}, we can prove the result similar to the proof of subcase 2.3.2.

Combining the above arguments, the result holds.
\end{proof}

By Lemma \ref{lem22}, Theorem \ref{thm42}, Theorem \ref{thm46} and Theorem \ref{thm414}, we have
\begin{thm}\label{thm415}
Conjecture \ref{con11} is true for bicyclic graphs.
\end{thm}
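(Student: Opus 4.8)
The plan is to reduce Theorem~\ref{thm415} entirely to results already at our disposal, so the argument itself will be short: the substance has been packaged into Theorems~\ref{thm42}, \ref{thm46} and \ref{thm414}. First I would invoke Lemma~\ref{lem22}: since the conjectured inequality is preserved under disjoint unions, it suffices to verify Conjecture~\ref{con21} for a connected bicyclic graph $G$ on $n$ vertices, for every $k$ with $1\le k\le n$.

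Next I would split on the value of $k$. For $k\neq 3$, Theorem~\ref{thm42} (in its bicyclic case) already gives $S_k^+(G)\le e(G)+\binom{k+1}{2}$, so nothing remains. The only outstanding case is $k=3$, where $\binom{k+1}{2}=6$. Here I would split further on $n$: if $n\le 10$, the conjecture is known to hold for all graphs on at most ten vertices (the result of F.~Ashraf et al. recalled in the introduction), so $G$ is covered. If $n\ge 11$, I would appeal to the structure theory of connected bicyclic graphs: every such graph is obtained by attaching hanging trees to some $\infty(p,q,t)$ or to some $\theta(p,q,t)$, i.e. it is either $\infty$-type or $\theta$-type. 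In the first case Theorem~\ref{thm46} yields $S_3^+(G)\le e(G)+6$; in the second Theorem~\ref{thm414} yields the same bound. Assembling the three cases ($k\neq3$; $k=3$, $n\le10$; $k=3$, $n\ge11$) proves Conjecture~\ref{con21} for all connected bicyclic graphs, and then Lemma~\ref{lem22} upgrades this to Conjecture~\ref{con11} for arbitrary bicyclic graphs.

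The genuinely hard part is not in this assembly but already behind us: it is the $k=3$, $n\ge 11$ analysis carried out in Theorems~\ref{thm46} and \ref{thm414}. There one must reduce, via Corollary~\ref{cor44} and the gluing lemmas~\ref{lem43}--\ref{lem44}, to finitely many ``base'' configurations (the graph $\infty'(3,3,1)$, the families $U_n^i(a,b)$, and the trees $T_n^i$), and then control $q_2$ or $q_3$ of these base graphs by explicit computation of the signless Laplacian characteristic polynomial (Lemma~\ref{lem26}) together with sign analysis of the resulting quintics (Lemmas~\ref{lem47}, \ref{lem49}, \ref{lem413}). Once those eigenvalue estimates are in place, Theorem~\ref{thm415} is immediate, and I would present it exactly as the bookkeeping above, with no additional computation.
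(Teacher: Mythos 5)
Your proposal is correct and follows essentially the same route as the paper: reduce to connected graphs via Lemma~\ref{lem22}, dispose of $k\neq 3$ by Theorem~\ref{thm42}, of $k=3$ with $n\le 10$ by the known result for graphs on at most ten vertices, and of $k=3$ with $n\ge 11$ by the $\infty$-type/$\theta$-type dichotomy together with Theorems~\ref{thm46} and \ref{thm414}. The paper states this assembly even more tersely, so no further comment is needed.
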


\end{document}